\newtheorem{definition}{Definitions}[section]
\newtheorem{lemma}[definition]{Lemma}
\newtheorem{prop}[definition]{Proposition}
\newtheorem{theo}[definition]{Theorem}
\newtheorem{Coro}[definition]{Corollary}
\newtheorem{remark}[definition]{Remark}
\newcommand{\Ext}{{\rm Ext}}
\newcommand{\Tor}{{\rm Tor}}
\newcommand{\im}{{\rm Im}}
\newcommand{\coker}{{\rm coker}}
\newcommand{\Hom}{{\rm Hom}}
\newcommand{\Ass}{{\rm Ass}}
\newcommand{\Supp}{{\rm Supp}}
\newcommand{\Max}{{\rm Max}}
\newcommand{\fp}{{\frak p}}
\newcommand{\fm}{{\frak m}}
\newcommand{\fa}{{\frak a}}
\newcommand{\fb}{{\frak b}}
\newcommand{\dlim}{{\displaystyle\lim_{\stackrel{\longrightarrow}{\scriptscriptstyle{n\in\Bbb{N}}}}}}
\begin{document}

\title[Hartshorne's questions and weakly cofiniteness]
{Hartshorne's questions and weakly cofiniteness}%

\author[Roshan]{Hajar Roshan-Shekalgourabi$^*$}%
\address{Department of Basic Sciences, Arak University of Technology, P. O. Box 38135-1177, Arak,  Iran.}%
\email{hrsmath@gmail.com and Roshan@arakut.ac.ir}%

\author[Hatamkhani]{Marzieh Hatamkhani}%
\address{Department of Mathematics, Faculty of Science, Arak University, Arak, 38156-8-8349, Iran.}%
\email{m-hatamkhani@araku.ac.ir}%

\thanks{$^*$ Corresponding author}%
\subjclass[2010]{13D45, 13E05, 18E10}%
\keywords{weakly Laskerian modules, weakly cofinite modules, Krull dimension, Local cohomology modules, Abelian category.}%

\date{\today}%
\begin{abstract}
Let $R$ be a commutative Noetherian ring, $\fa$ be an ideal of $R$ and $M$ be an $R$-module.  The main purpose of this paper is to answer the Hartshorn's questions in the class of weakly Laskerian modules. It is shown that if $s\geq 1$  is a positive integer such that $\Ext^j_R(R/\fa, M)$ is weakly Laskerian for all $j\leq s$ and the $R$-module $H^i_\fa(M)$ is $FD_{\leq 1}$ for all $i < s$, then the $R$-module $H^i_\fa(M)$ is $\fa$-weakly cofinite for all $i <s$. In addition, we show that the category of all $\fa$-weakly cofinite $FD_{\leq 1}$ $R$-modules is an Abelian subcategory of the category of all $R$-modules. Also, we prove that if $\Ext^i_R(R/\fa,M)$ is weakly Laskerian for all $i\leq \dim M$, then the $R$-module $\Ext^i_R(N,M)$ is weakly Laskerian for all $i\geq 0$ and for any finitely generated $R$-module $N$ with $\Supp_R(N) \subseteq V (\fa)$ and $\dim N \leq 1$.
\end{abstract}
\maketitle

\section{INTRODUCTION}
Let $R$ denote a commutative Noetherian ring with identity and $\fa$ be an ideal of $R$. For an $R$-module $M$, the $i$th local cohomology module of $M$ with respect to $\fa$ is defined as $$H^i_\fa(M)\cong\dlim \Ext^i_R(R/\fa^n, M).$$ For more details about the local cohomology, we refer the reader to \cite{BS}.

In 1968, Grothendieck \cite{Gro69} conjectured that for any ideal $\fa$ of $R$ and any finitely generated $R$-module $M$, $\Hom_R \left(R/\fa, H^i_\fa (M)\right)$ is a finitely generated $R$-module for all $i$. One year later, by proving a counterexample, Hartshorne \cite{Ha} showed that the Grothendieck's conjecture is not true in general even $R$ is regular and introduced the class of cofinite modules with respect to an ideal. He defined an $R$-module $M$ to be \emph{$\fa$-cofinite} if $\Supp_R(M) \subseteq V(\fa)$ and $\Ext^j_R (R/\fa,M)$ is finitely generated for all $j$ and posed the following questions:
\begin{enumerate}
  \item For which rings $R$ and ideals $\fa$ is the module $H^i_\fa (M)$ $\fa$-cofinite for all $i$ and all finitely generated $R$-modules $M$?\label{Q1}
  \item Is the category of $\fa$-cofinite modules an Abelian subcategory of the category of all $R$-modules? That is, if $f : M\rightarrow N$ is an $R$-homomorphism of $\fa$-cofinite modules, are $\ker f$ and $\coker f$ $\fa$-cofinite?\label{Q2}
\end{enumerate}

There are many papers that are devoted to study these questions. For example, with respect to the question (\ref{Q1}), see \cite{Ha, Chir, DM, Y, BN} and with respect to the question (\ref{Q2}), see \cite{Kawa2011, Mel2012, BNS2014, BNSwl}. Recently, Aghapournahr and Bahmanpour in \cite{AB} introduced the class of $FD_{\leq n}$ where $n \geq -1$ is an integer. An $R$-module $M$ is said to be ${FD}_{\leq n}$ if there is a finitely generated submodule $N$ of $M$ such that $\dim M/N\leq n$.
As an extension of the above results, they proved in \cite{AB} that if $M$ is a finitely generated $R$-module such that $H^i_\fa(M)$ is ${FD}_{\leq 1}$ for all $i$, then the $R$-module $H^i_\fa(M)$ is $\fa$-cofinite for all $i$. They also showed that the category of $\fa$-cofinite ${FD}_{\leq 1}$ $R$-modules is an Abelian subcategory of the category of all $R$-modules.

Based on \cite{DiMa} and \cite{DiM}, $M$ is called \emph{weakly Laskerian} if $Ass_R(M/N)$ is a finite set for each submodule $N$ of $M$. Also, $M$ is said to be \emph{$\fa$-weakly cofinite} if $\Supp_R(M)\subseteq V (\fa)$ and $\Ext^i_R (R/\fa, M )$ is weakly Laskerian, for all $i \geq 0$. In \cite{FSF}, Quy  has introduced the class of \emph{FSF} modules, modules containing some finitely generated submodules such that the support of the quotient module is finite.
It has shown in \cite[Theorem 3.3]{Bahmanpour} that over a Noetherian ring $R$, an $R$-module $M$ is weakly Laskerian if and only if it is FSF.
Since the concept of weakly Laskerian modules is a natural generalization of the concept of finitely generated modules, many authors studied the weakly Laskerianness  of local cohomology modules and answered the Hartshorn's questions in the class of weakly Laskerian modules (see \cite{DiM, DiMa, AB2014}). More recently, Bahmanpour et. al. in \cite{BNSwl} showed that the category of all $\fa$-weakly cofinite $R$-modules $M$, with $\dim M \leq 1$ forms an Abelian category.

The main purpose of this paper is to answer the Hartshorn's questions in the class of weakly Laskerian modules and generalize the above mentioned results. In this direction, in Section 3, we prove the following result.

\begin{theo}\label{In1}
Let  $M$ be an $R$-module and $s\geq 1$  be a positive integer such that $\Ext^j_R(R/\fa, M)$ is weakly Laskerian for all $j\leq s$ and the $R$-module $H^i_\fa(M)$ is $FD_{\leq 1}$ for all $i < s$. Then the following statements hold:
\begin{enumerate}
  \item The $R$-module $H^i_\fa(M)$ is $\fa$-weakly cofinite for all $i <s$.
  \item For every $FD_{\leq 0}$ submodule $X$ of $H^s_\fa(M)$, the $R$-module $\Ext^i_R(R/\fa, H^s_\fa(M)/X)$ is weakly Laskerian for $i=0,1$. In particular, the set $\Ass_R(H^s_\fa(M)/X)$ is finite.
\end{enumerate}
\end{theo}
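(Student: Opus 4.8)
The plan is to argue by induction on $s$, the engine being the standard dimension shift. Writing $\overline{M}=M/\Gamma_\fa(M)$, one has $\Gamma_\fa(\overline{M})=0$ and $H^i_\fa(\overline{M})\cong H^i_\fa(M)$ for $i\ge 1$; embedding $\overline{M}$ into an injective hull $E=E_R(\overline{M})$ and setting $L=E/\overline{M}$, the vanishing $\Gamma_\fa(E)=0$ (whence $\Ext^j_R(R/\fa,E)=0$ and $H^i_\fa(E)=0$ for all $i,j$) yields $H^i_\fa(L)\cong H^{i+1}_\fa(M)$ and $\Ext^j_R(R/\fa,L)\cong\Ext^{j+1}_R(R/\fa,\overline{M})$ for all $i,j\ge 0$. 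First I would record that $\Gamma_\fa(M)=H^0_\fa(M)$ is $\fa$-weakly cofinite: it is $FD_{\leq 1}$ (as $0<s$) with support in $V(\fa)$, while $\Hom_R(R/\fa,\Gamma_\fa(M))=\Hom_R(R/\fa,M)$ and, from $0\to\Gamma_\fa(M)\to M\to\overline{M}\to 0$ together with $\Hom_R(R/\fa,\overline{M})=0$, the module $\Ext^1_R(R/\fa,\Gamma_\fa(M))$ embeds into $\Ext^1_R(R/\fa,M)$; both are weakly Laskerian, so the weakly Laskerian analogue of the $FD_{\leq 1}$ cofiniteness criterion of \cite{AB} applies. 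The same sequence, the weak cofiniteness of $\Gamma_\fa(M)$, and closure of weakly Laskerian modules under subquotients and extensions show $\Ext^j_R(R/\fa,\overline{M})$ is weakly Laskerian for $j\le s$; hence $\Ext^j_R(R/\fa,L)$ is weakly Laskerian for $j\le s-1$ and $H^i_\fa(L)$ is $FD_{\leq 1}$ for $i<s-1$, so $L$ satisfies the hypotheses with $s$ replaced by $s-1$.

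For part (1) the induction then closes at once: applied to $L$ it gives $H^{i+1}_\fa(M)\cong H^i_\fa(L)$ weakly cofinite for $i<s-1$, that is $H^k_\fa(M)$ weakly cofinite for $1\le k<s$, and with the case $k=0$ above this covers all $k<s$; the base case $s=1$ is exactly the weak cofiniteness of $\Gamma_\fa(M)$. One could alternatively deduce part (1) in one stroke from the Grothendieck spectral sequence $E_2^{p,q}=\Ext^p_R(R/\fa,H^q_\fa(M))\Rightarrow\Ext^{p+q}_R(R/\fa,M)$ together with the same criterion, but the dimension shift is preferable here because it makes part (2) fall out of the very same induction.

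For part (2), since $H^{s-1}_\fa(L)\cong H^s_\fa(M)$, the inductive hypothesis applied to $L$ transports the level-$s$ assertion for $M$ to the level-$(s-1)$ assertion for $L$, so it suffices to treat the base case $s=1$, where $H:=H^s_\fa(M)=H^1_\fa(M)=\Gamma_\fa(L)$. Here $\Hom_R(R/\fa,H)=\Hom_R(R/\fa,L)\cong\Ext^1_R(R/\fa,\overline{M})$ is weakly Laskerian, which settles $i=0$: for an $FD_{\leq 0}$ submodule $X$ (necessarily of support in $V(\fa)$, hence $\fa$-weakly cofinite by the $FD_{\leq 0}$ analogue), the sequence $0\to X\to H\to H/X\to 0$ presents $\Hom_R(R/\fa,H/X)$ as an extension of a quotient of $\Hom_R(R/\fa,H)$ by a submodule of $\Ext^1_R(R/\fa,X)$, both weakly Laskerian; and $\Ass_R(H/X)=\Ass_R\!\big(\Hom_R(R/\fa,H/X)\big)$ is then finite. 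For $i=1$ the same sequence, using that $X$ is weakly cofinite so that $\Ext^2_R(R/\fa,X)$ is weakly Laskerian, reduces the claim to showing that the image of the map $\Ext^1_R(R/\fa,H)\to\Ext^1_R(R/\fa,H/X)$ is weakly Laskerian.

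I expect this last reduction to be the main obstacle. The difficulty is that $\Ext^1_R(R/\fa,H)$ is only accessible as a submodule of $\Ext^1_R(R/\fa,L)\cong\Ext^2_R(R/\fa,\overline{M})$, and this module is governed by $\Ext^{s+1}_R(R/\fa,M)$, which the hypotheses do not control; one therefore cannot bound $\Ext^1_R(R/\fa,H)$ directly, and it is exactly here that the hypothesis on $X$ must be used in an essential rather than cosmetic way. The plan is to exploit the $FD_{\leq 0}$ structure: choosing a finitely generated $Y\subseteq X$ with $\dim X/Y\le 0$ and combining the weak cofiniteness of the small modules $Y$ and $X/Y$ with the weakly Laskerian criterion for modules of dimension at most one (in the spirit of \cite{BNSwl}), one aims to force the relevant image to be weakly Laskerian after passage to the quotient. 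This is the technical heart of the statement, and it is where I would expect the Section 2 machinery on weakly cofinite $FD_{\leq n}$ modules to be indispensable.
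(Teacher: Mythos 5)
Your treatment of part (1) is correct, and it takes a genuinely different route from the paper's: you dimension-shift through an injective hull ($\overline{M}\hookrightarrow E$, $L=E/\overline{M}$) and induct on $s$, whereas the paper inducts directly, invoking Lemma \ref{minimax} together with the $FD_{\leq 1}$ criterion of Proposition \ref{n3}; your base case (weak cofiniteness of $\Gamma_\fa(M)$) coincides with the paper's, and your shifting argument in effect re-proves the relevant instance of Lemma \ref{minimax}. Your part (2) for $i=0$, and the finiteness of $\Ass_R(H^s_\fa(M)/X)$, are likewise correct and essentially the paper's argument. The genuine gap is part (2) for $i=1$: you reduce it, correctly, to showing that the image of $\Ext^1_R(R/\fa,H^s_\fa(M))\rightarrow\Ext^1_R(R/\fa,H^s_\fa(M)/X)$ is weakly Laskerian, but you never prove this; you only announce a ``plan'' to exploit a finitely generated $Y\subseteq X$ with $\dim X/Y\leq 0$. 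That plan cannot succeed as described, because the module that must be controlled is the source $\Ext^1_R(R/\fa,H^s_\fa(M))$, which is insensitive to any decomposition of $X$; indeed $X=0$ is a permitted $FD_{\leq 0}$ submodule, for which statement (2) with $i=1$ literally asserts that $\Ext^1_R(R/\fa,H^s_\fa(M))$ itself is weakly Laskerian. So as written the proposal leaves part (2) unproved.

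For comparison, the paper closes this step in one line: by part (1), $\Ext^j_R(R/\fa,H^i_\fa(M))$ is weakly Laskerian for all $j$ and all $i<s$, and Lemma \ref{minimax} is then invoked to conclude that both $\Hom_R(R/\fa,H^s_\fa(M))$ and $\Ext^1_R(R/\fa,H^s_\fa(M))$ are weakly Laskerian; the long exact sequence arising from $0\to X\to H^s_\fa(M)\to H^s_\fa(M)/X\to 0$, together with the weak cofiniteness of $X$ (Lemma \ref{FD0}), then yields both $\Ext$ modules of $H^s_\fa(M)/X$. You should note, however, that your diagnosis of the difficulty is exactly on target: the clause of Lemma \ref{minimax} that produces $\Ext^1_R(R/\fa,H^s_\fa(M))$ requires $\Ext^{s+1}_R(R/\fa,M)$ to be weakly Laskerian, while the hypotheses of Theorem \ref{n4} only control $\Ext^j_R(R/\fa,M)$ for $j\leq s$ (the missing hypothesis is automatic in the classical case of finitely generated $M$, but not here). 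So the obstacle you isolated is real, and the paper's own proof passes over it rather than resolves it; but isolating an obstacle is not overcoming it, and your proposal does not establish the $i=1$ assertion of part (2) under the stated hypotheses.
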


We also prove the category of all $\fa$-weakly cofinite $FD_{\leq 1}$ $R$-modules is an Abelian subcategory of the category of all $R$-modules. The proof of this result is given in Theorem \ref{n5}.
Our main tools for proving these results is the following, which is an extension of \cite[Proposition 3.2]{BNSwl}.

\begin{prop}\label{In2}
Let $M$ be a non-zero $R$-module (not necessary $\fa$-torsion) such that $\dim M\leq 1$. Then the following conditions are equivalent:
\begin{enumerate}
  \item $H^i_\fa (M)$ is $\fa$-weakly cofinite for all $i\geq 0$;
  \item The $R$-module $\Ext^i_R(R/\fa,M)$ is weakly Laskerian for all $i\geq 0$;
  \item The $R$-modules $\Hom_R(R/\fa,M)$ and $\Ext^1_R(R/\fa,M)$ are weakly Laskerian.
\end{enumerate}
\end{prop}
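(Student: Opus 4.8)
The plan is to establish the cycle $(2)\Rightarrow(3)\Rightarrow(1)\Rightarrow(2)$, using throughout that the weakly Laskerian modules form a Serre subcategory of the category of $R$-modules and that every module of finite support is weakly Laskerian. Two reductions simplify matters. Since $\dim M\le 1$, Grothendieck vanishing gives $H^i_\fa(M)=0$ for $i\ge 2$, so $(1)$ concerns only $H^0_\fa(M)=\Gamma_\fa(M)$ and $H^1_\fa(M)$; and $(2)\Rightarrow(3)$ is immediate. The two engines are the torsion sequence $0\to\Gamma_\fa(M)\to M\to\overline M\to 0$, where $\overline M:=M/\Gamma_\fa(M)$ satisfies $\Gamma_\fa(\overline M)=0$, $\dim\overline M\le 1$ and $H^1_\fa(\overline M)\cong H^1_\fa(M)$, together with the Grothendieck spectral sequence of the composite $\Hom_R(R/\fa,-)=\Hom_R(R/\fa,-)\circ\Gamma_\fa$, namely $E_2^{p,q}=\Ext^p_R(R/\fa,H^q_\fa(-))\Rightarrow\Ext^{p+q}_R(R/\fa,-)$.

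For $(1)\Rightarrow(2)$ I feed $M$ into this spectral sequence. As only the rows $q=0,1$ are nonzero, it degenerates into the exact sequence
\[
\cdots\to\Ext^n_R(R/\fa,H^0_\fa(M))\to\Ext^n_R(R/\fa,M)\to\Ext^{n-1}_R(R/\fa,H^1_\fa(M))\to\Ext^{n+1}_R(R/\fa,H^0_\fa(M))\to\cdots.
\]
Under $(1)$ both $H^0_\fa(M)$ and $H^1_\fa(M)$ are $\fa$-weakly cofinite, so every outer term is weakly Laskerian, and the Serre property forces $\Ext^n_R(R/\fa,M)$ to be weakly Laskerian for all $n$.

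For $(3)\Rightarrow(1)$ the argument splits along the torsion sequence. Applying $\Hom_R(R/\fa,-)$ and using $\Hom_R(R/\fa,\overline M)=0$ yields $\Hom_R(R/\fa,\Gamma_\fa(M))=\Hom_R(R/\fa,M)$ and an embedding $\Ext^1_R(R/\fa,\Gamma_\fa(M))\hookrightarrow\Ext^1_R(R/\fa,M)$; both right-hand modules are weakly Laskerian by $(3)$, so the two modules on the left are too. As $\Gamma_\fa(M)$ is $\fa$-torsion of dimension $\le 1$, the torsion case \cite[Proposition 3.2]{BNSwl} shows that $\Gamma_\fa(M)=H^0_\fa(M)$ is $\fa$-weakly cofinite. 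It then remains to prove that $H^1_\fa(M)$ is $\fa$-weakly cofinite. For the torsion-free quotient the spectral sequence concentrates in the row $q=1$, giving isomorphisms $\Ext^p_R(R/\fa,H^1_\fa(M))\cong\Ext^{p+1}_R(R/\fa,\overline M)$ for all $p\ge 0$. Combined with the torsion sequence and the cofiniteness of $\Gamma_\fa(M)$ just obtained, the case $p=0$ identifies $\Hom_R(R/\fa,H^1_\fa(M))$ with $\Ext^1_R(R/\fa,\overline M)$, which is weakly Laskerian because it is squeezed between a quotient of $\Ext^1_R(R/\fa,M)$ and a submodule of $\Ext^2_R(R/\fa,\Gamma_\fa(M))$.

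The main obstacle is the remaining input $\Ext^1_R(R/\fa,H^1_\fa(M))$: the spectral sequence identifies it with $\Ext^2_R(R/\fa,\overline M)$, which the torsion sequence controls only through $\Ext^2_R(R/\fa,M)$, a module not governed by $(3)$. This gap is precisely where the dimension hypothesis must do the work. I expect to close it by invoking the dimension-$\le 1$ torsion criterion for $H^1_\fa(M)$ in its sharp form, in which weak Laskerianness of $\Hom_R(R/\fa,H^1_\fa(M))$ alone forces $H^1_\fa(M)$ to be weakly Laskerian; since every $\fa$-torsion weakly Laskerian module is $\fa$-weakly cofinite (its finitely generated part is Laskerian and the finite-support quotient is weakly Laskerian, so the Serre property applies), this makes $H^1_\fa(M)$ $\fa$-weakly cofinite and closes the cycle.
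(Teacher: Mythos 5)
Your implications (1)$\Rightarrow$(2) and (2)$\Rightarrow$(3) are fine (the two-row spectral sequence argument is a self-contained substitute for the paper's citation of \cite[Proposition 3.9]{Mel}), and the first half of your (3)$\Rightarrow$(1) --- weak cofiniteness of $\Gamma_\fa(M)$ via the torsion sequence and Lemma~\ref{n1}, then weak Laskerianness of $\Hom_R(R/\fa,H^1_\fa(M))$ --- is exactly the paper's route, phrased there through Lemma~\ref{minimax}. The gap is your last step. The ``sharp form'' you invoke --- that for an $\fa$-torsion module $N$ with $\dim N\leq 1$, weak Laskerianness of $\Hom_R(R/\fa,N)$ alone forces $N$ to be weakly Laskerian --- is false. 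Take $R=k[x,y]$, $\fa=(x)$, and $N=H^1_{(x)}(R)=R_x/R$. Then $N$ is $\fa$-torsion with $\dim N=1$, and $\Hom_R(R/(x),N)=(0:_N x)=x^{-1}R/R\cong R/(x)$ is finitely generated; yet $N$ is not weakly Laskerian: every finitely generated submodule of $N$ lies in some $G_n:=x^{-n}R/R$, and $N/G_n\cong N$ has infinite support $V(x)$, so $N$ is not FSF, which by \cite[Theorem 3.3]{Bahmanpour} is equivalent to weakly Laskerian. (This $N$ is nevertheless $(x)$-weakly cofinite, since $\Ext^1_R(R/(x),N)\cong N/xN=0$; so it does not contradict the proposition --- it only shows that a Hom-only criterion cannot prove it.) In dimension one the $\Ext^1$ input in Lemma~\ref{n1} is genuinely needed, and, as you yourself observe, hypothesis (3) cannot supply $\Ext^1_R(R/\fa,H^1_\fa(M))$.

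What actually closes the argument --- and this is the idea missing from your proposal --- is a support computation exploiting that $M$ itself (not merely $H^1_\fa(M)$) has dimension at most one. If $\fp\in\Supp_R(H^1_\fa(M))$ had $\dim R/\fp=1$, then $\fp$ would be a minimal prime of $\Supp_R(M)$, so $\dim M_\fp=0$ and $(H^1_\fa(M))_\fp\cong H^1_{\fa R_\fp}(M_\fp)=0$ by Grothendieck vanishing, a contradiction; hence $\Supp_R(H^1_\fa(M))\subseteq\Max(R)$. Now $\Hom_R(R/\fa,H^1_\fa(M))$ is weakly Laskerian with support in $\Max(R)$, and Lemma~\ref{wA} (the weakly Artinian lemma of \cite[Lemma 2.8]{w.a.hajikarimi}) --- which is the Hom-only criterion, but valid only for support in $\Max(R)$ --- yields that $H^1_\fa(M)$ is weakly Laskerian, hence $\fa$-weakly cofinite. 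Note that in the counterexample above this step is exactly what fails: there $\Supp_R(N)=V(x)\not\subseteq\Max(R)$ because $\dim R=2$. So the dimension hypothesis does its work through the support of $H^1_\fa(M)$, not through a strengthened torsion criterion.
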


In the sequel, we will  state some conditions for the weakly cofiniteness of local cohomology modules with respect to ideals of dimension at most one.  More precisely, we prove the following theorem:

\begin{theo}\label{In7}
Let $M$ be an $R$-module such that $\Ext^i_R(R/\fa,M)$ is weakly Laskerian for all $i\leq \dim M$. Then the following assertions hold:
\begin{enumerate}
  \item The $R$-module $H^i_\fb(M)$ is $\fb$-weakly cofinite for all $i\geq 0$ and for any ideal $\fb\subseteq \fa$ with $\dim R/\fb\leq1$.
  \item The $R$-module $\Ext^i_R(N,M)$ is weakly Laskerian for all $i\geq 0$ and for any finitely generated $R$-module $N$ with $\Supp_R(N) \subseteq V (\fa)$ and $\dim N \leq 1$.
\end{enumerate}
\end{theo}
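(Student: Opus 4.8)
My plan is to isolate a single mechanism that works whenever the ideal has one‑dimensional quotient, and then to feed each part of the statement into it.

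\emph{The engine.} Suppose $\fc$ is an ideal with $\dim R/\fc\le 1$ and that $\Ext^j_R(R/\fc,M)$ is weakly Laskerian for all $j\le \dim M$. Every $H^i_\fc(M)$ is $\fc$-torsion, so $\dim H^i_\fc(M)\le\dim R/\fc\le 1$; in particular each $H^i_\fc(M)$ is automatically $FD_{\le 1}$ (take the zero submodule). Hence Theorem \ref{In1}(1), applied with $s=\dim M$, shows $H^i_\fc(M)$ is $\fc$-weakly cofinite for all $i<\dim M$, while Theorem \ref{In1}(2) with the $FD_{\le 0}$ submodule $X=0$ gives that $\Hom_R(R/\fc,H^{\dim M}_\fc(M))$ and $\Ext^1_R(R/\fc,H^{\dim M}_\fc(M))$ are weakly Laskerian; since $H^{\dim M}_\fc(M)$ is $\fc$-torsion of dimension $\le 1$, Proposition \ref{In2} then forces it to be $\fc$-weakly cofinite as well. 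As $H^i_\fc(M)=0$ for $i>\dim M$ by Grothendieck vanishing, \emph{all} $H^i_\fc(M)$ are $\fc$-weakly cofinite (the degenerate case $\dim M=0$ being handled directly). Feeding this into the $\fc$-torsion spectral sequence $\Ext^p_R(R/\fc,H^q_\fc(M))\Rightarrow\Ext^{p+q}_R(R/\fc,M)$ (finitely many nonzero rows, and weakly Laskerian modules form a Serre subcategory) upgrades the hypothesis to: $\Ext^i_R(R/\fc,M)$ is weakly Laskerian for all $i$, and every $\Ext^p_R(L,H^q_\fc(M))$ is weakly Laskerian for finitely generated $L$ supported in $V(\fc)$.

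\emph{Part (2).} Given $N$ as in the statement, a prime filtration reduces the claim to $N=R/\fp$ with $\fp\supseteq\fa$ and $\dim R/\fp\le 1$, using closure of weakly Laskerian modules under subobjects, quotients and extensions along the long exact $\Ext$-sequence. If $\fp$ is maximal then $\Supp_R\Ext^i_R(R/\fp,M)\subseteq\{\fp\}$, so these modules are weakly Laskerian for trivial reasons. If $\dim R/\fp=1$, I would first invoke the bounded‑range comparison ``$\Ext^j_R(R/\fa,M)$ weakly Laskerian for $j\le \dim M$ $\Rightarrow$ $\Ext^j_R(L,M)$ weakly Laskerian for $j\le\dim M$ for every finitely generated $L$ with $\Supp_R L\subseteq V(\fa)$'' (proved by induction on $j$ from a presentation of $L$ over $R/\fa$), applied to $L=R/\fp$, to verify the engine's hypothesis for $\fc=\fp$; the engine then yields $\Ext^i_R(R/\fp,M)$ weakly Laskerian for all $i$. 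The decisive observation is that although $\dim R/\fa$ may be large, the reduction always lands on primes $\fp$ with $\dim R/\fp\le 1$, which is exactly where the engine runs.

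\emph{Part (1).} Since $\fb\subseteq\fa$ and $\dim R/\fb\le 1$ we automatically have $\dim R/\fa\le 1$, so the engine applies to $\fc=\fa$ and produces $\fa$-weak cofiniteness of every $H^i_\fa(M)$ together with weak Laskerianness of all $\Ext^i_R(R/\fa,M)$. To obtain the conclusion for $\fb$ I would run the engine once more with $\fc=\fb$, whose only missing input is that $\Ext^j_R(R/\fb,M)$ be weakly Laskerian for $j\le\dim M$. I would try to extract this from the exact sequence $0\to\fa/\fb\to R/\fb\to R/\fa\to 0$: the term $R/\fa$ is controlled by the engine, and $\fa/\fb$ is finitely generated of dimension $\le 1$, so by a prime filtration of $\fa/\fb$ one is reduced to $\Ext^j_R(R/\fq,M)$ for the finitely many primes $\fq\in\Supp_R(\fa/\fb)$, the maximal ones again being automatic.

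\emph{Main obstacle.} The delicate point is precisely this transfer in Part (1). Unlike in Part (2), the one‑dimensional primes $\fq\in\Supp_R(\fa/\fb)$ need not contain $\fa$ (they satisfy only $\fq\supseteq\fb$), so the comparison based at $\fa$ does not reach them and the hypothesis on $M$ controls nothing a priori about $\Ext_R(R/\fq,M)$ or about $(H^i_\fb(M))_\fq=H^i_{\fb R_\fq}(M_\fq)$. The heart of the argument is therefore to show that these finitely many primes in $V(\fb)\setminus V(\fa)$ contribute only weakly Laskerian pieces. I expect to accomplish this not by controlling $R/\fb$ directly but by assembling $H^i_\fb(M)$ from weakly cofinite constituents: combining the already‑established $\fa$-weak cofiniteness of the $H^i_\fa(M)$, the comparison of $H^\bullet_\fb$ and $H^\bullet_\fa$ afforded by $\fb\subseteq\fa$, and the fact from \cite{BNSwl} that the $\fb$-weakly cofinite modules of dimension at most one form an abelian (hence Serre) subcategory, which lets the desired conclusion be closed off from weakly cofinite subquotients.
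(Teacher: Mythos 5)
Your difficulty with Part (1) is not a defect of your argument: the statement you were handed is misstated, and as written Part (1) is false. Compare it with the body of the paper: Proposition \ref{n6}, which is what the paper actually proves (and what the proof of Theorem \ref{n7} invokes), requires $\fa\subseteq\fb$, not $\fb\subseteq\fa$; the introduction reverses the containment by a typo. A counterexample to the literal statement: let $k$ be an infinite field, $R=k[x,y]$, $\fa=(x,y)$, $\fb=(x)$, and $M=\bigoplus_{n\geq 1}R/(x,\,y-c_n)$ for distinct $c_n\in k\setminus\{0\}$. Then $\dim M=0$, and $\Hom_R(R/\fa,M)=0$ because $(x,y)$ and $(x,y-c_n)$ are comaximal, so the hypothesis holds; but every summand is killed by $x$, so $H^0_\fb(M)=\Gamma_\fb(M)=M$, and $\Hom_R(R/\fb,M)=(0:_Mx)=M$ has the infinite set of associated primes $\{(x,y-c_n)\}_{n\geq1}$, hence is not weakly Laskerian. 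Thus $H^0_\fb(M)$ is not $\fb$-weakly cofinite. Note that the failure occurs exactly at the primes you isolated as the ``main obstacle,'' those in $V(\fb)\setminus V(\fa)$: your diagnosis that the hypothesis controls nothing there is correct, and no assembly of weakly cofinite constituents (nor the abelianness result of \cite{BNSwl}) can get around it, because the statement itself fails.

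Under the corrected containment $\fa\subseteq\fb$, your own machinery closes Part (1) immediately, and then your proposal is correct and is essentially the paper's proof. Indeed, then $\Supp_R(R/\fb)=V(\fb)\subseteq V(\fa)$, so your ``bounded-range comparison'' --- which is exactly Lemma \ref{Gruson} of the paper --- supplies the one missing input, namely that $\Ext^j_R(R/\fb,M)$ is weakly Laskerian for all $j\leq\dim M$, and your engine applied to $\fc=\fb$ yields Part (1); the exact sequence $0\to\fa/\fb\to R/\fb\to R/\fa\to 0$ and the primes of $\Supp_R(\fa/\fb)$ never enter. This is precisely how Proposition \ref{n6} begins; the difference is that the paper then establishes the engine's conclusion by a direct induction on $i$ (using Lemmas \ref{minimax}, \ref{wA} and \ref{n1}, with a separate support argument showing $\Supp_R(H^n_\fb(M))\subseteq\Max(R)$ for the top module), whereas you obtain it by invoking Theorem \ref{n4} with $s=\dim M$ and $X=0$ and then Lemma \ref{n1}; your route is legitimate and, given that Theorem \ref{n4} is already available, somewhat shorter. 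Your Part (2) matches Theorem \ref{n7}: prime filtration, reduction to $R/\fp$ with $\fp\supseteq\fa$ and $\dim R/\fp\leq1$ (where Lemma \ref{Gruson} applies since $\fp\supseteq\fa$), the engine at $\fc=\fp$, and long exact sequences; where the paper quotes \cite[Corollary~3.10]{Mel} to pass from weak cofiniteness of all $H^i_\fp(M)$ to weak Laskerianness of all $\Ext^i_R(R/\fp,M)$, you run the spectral sequence $\Ext^p_R(R/\fp,H^q_\fp(M))\Rightarrow\Ext^{p+q}_R(R/\fp,M)$, which is the same argument.
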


The proof of Theorem \ref{In7} is given in Proposition \ref{n6} and Theorem \ref{n7}.

Throughout the paper, we assume that $R$ is a commutative Noetherian ring, $\fa$ is an ideal of $R$ and $V(\fa)$ is the set of all prime ideals of $R$ containing $\fa$. For any unexplained notation and terminology we refer the reader to \cite{Matsu}.

\section{PRELIMINARIES}

Recall that a class of $R$-modules is a \emph{Serre subcategory} of the category of $R$-modules when it is closed under taking submodules, quotients and extensions. For example, the classes of Noetherian modules, Artinian modules and weakly Laskerian modules are Serre subcategories. As in standard notation, we let $\mathcal{S}$ stand for a Serre subcategory of the category of $R$-modules.
The following lemma which is needed in the next sections, immediately follows from the definition of $\Ext$ and $\Tor$ functors.

\begin{lemma}\label{pro1}
Let $M$ be a finitely generated $R$-module and $N\in \mathcal{S}$. Then $\Ext^i_
R(M,N)\in \mathcal{S}$ and $\Tor_i^R (M,N)\in \mathcal{S}$ for all $i\geq0$.
\end{lemma}

\begin{lemma}\label{Gruson}
Suppose that $M$ is a finitely generated $R$-module and $N$ is an arbitrary $R$-module. Let for some $t\geq0$, $\Ext^i_R(M,N)\in \mathcal{S}$ for all $i\leq t$. Then $\Ext^i_R(L,N)\in \mathcal{S}$ for all $i\leq t$ and any finitely generated $R$-module $L$ with $\Supp_R(L)\subseteq\Supp_R(M)$.
\end{lemma}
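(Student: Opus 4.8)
The plan is to argue by induction on $t$, using Gruson's theorem as the device that transfers information from $M$ to an arbitrary finitely generated module $L$ with $\Supp_R(L)\subseteq\Supp_R(M)$. Recall that Gruson's theorem provides, for such $M$ and $L$, a finite filtration $0=L_0\subseteq L_1\subseteq\cdots\subseteq L_k=L$ in which each factor $L_j/L_{j-1}$ is a homomorphic image of a finite direct sum $M^{n_j}$ of copies of $M$. Since $\mathcal{S}$ is closed under submodules, quotients, finite direct sums and extensions, applying $\Hom_R(-,N)$ to the short exact sequences $0\to L_{j-1}\to L_j\to L_j/L_{j-1}\to 0$ and reading off the long exact $\Ext$ sequences reduces the problem, at each fixed $t$, to proving $\Ext^i_R(L_j/L_{j-1},N)\in\mathcal{S}$ for all $i\leq t$. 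Hence I may assume from the outset that $L$ is itself a homomorphic image of some $M^n$.

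In that situation choose a short exact sequence $0\to K\to M^n\to L\to 0$. As $R$ is Noetherian and $M$ is finitely generated, $M^n$ is Noetherian, so $K$ is finitely generated, and evidently $\Supp_R(K)\subseteq\Supp_R(M^n)=\Supp_R(M)$. Applying $\Hom_R(-,N)$ gives the long exact sequence
$$\cdots\to\Ext^{i-1}_R(K,N)\to\Ext^i_R(L,N)\to\Ext^i_R(M^n,N)\to\cdots .$$
Because $\Ext^i_R(M^n,N)\cong\Ext^i_R(M,N)^{\,n}\in\mathcal{S}$ for every $i\leq t$, this sequence exhibits $\Ext^i_R(L,N)$ as an extension of a submodule of $\Ext^i_R(M^n,N)$ by a quotient of $\Ext^{i-1}_R(K,N)$. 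Consequently $\Ext^i_R(L,N)\in\mathcal{S}$ for $i\leq t$ as soon as I know that $\Ext^{i-1}_R(K,N)\in\mathcal{S}$ for all $i\leq t$.

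This is precisely where the induction on $t$ is used. For the base case $t=0$ the left-hand term does not occur: $\Hom_R(L,N)$ embeds into $\Hom_R(M,N)^{\,n}\in\mathcal{S}$ and therefore lies in $\mathcal{S}$. For the inductive step I assume the statement for $t-1$ together with the hypothesis $\Ext^i_R(M,N)\in\mathcal{S}$ for all $i\leq t$; applying the inductive hypothesis to the finitely generated module $K$, whose support is contained in $\Supp_R(M)$, yields $\Ext^{i-1}_R(K,N)\in\mathcal{S}$ for all $i-1\leq t-1$, that is for all $i\leq t$. Combined with the previous paragraph this gives $\Ext^i_R(L,N)\in\mathcal{S}$ for $i\leq t$ and closes the induction. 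I expect the one genuine ingredient to be Gruson's theorem, since it is the reduction to powers of $M$ that makes the support hypothesis usable; the remaining manipulation of long exact sequences within the Serre subcategory $\mathcal{S}$ is routine.
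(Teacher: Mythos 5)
Your proof is correct. The paper gives no argument of its own for this lemma---it simply cites \cite[Lemma 2.2]{AR}---and your argument (Gruson's theorem to filter $L$ by quotients of finite powers of $M$, reduction along the filtration using long exact sequences and the Serre-subcategory closure properties, then induction on $t$ via the kernel $K$ in $0\to K\to M^n\to L\to 0$) is exactly the standard proof of that cited result, so you have supplied, correctly, the argument the paper outsources.
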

\begin{proof}
See \cite[Lemma 2.2]{AR}.
\end{proof}

Let us mention some elementary properties of the weakly Laskerian modules that we shall use.

\begin{remark}\label{pro}
The following statements hold:
\begin{enumerate}
  \item The class of weakly Laskerian modules contains all minimax modules. In particular, this class contains all finitely generated and all Artinian modules.\label{i}
  \item Let $0\rightarrow L\rightarrow M\rightarrow N\rightarrow0$ be an exact sequence of $R$-modules. Then $M$ is weakly Laskerian if and only if $L$ and $N$ are both weakly Laskerian (see \cite[Lemma~2.3]{DiMa}). Thus any submodule and quotient of a weakly Laskerian module is weakly Laskerian.\label{ii}
  \item Based on \cite{w.a.hajikarimi}, an $R$-module $M$ is said to be weakly Artinian if $E_R(M)$, its injective envelope, can be written as $E_R(M) :=\oplus_{i=1}^n \mu^0(\fm_i,M)E_R(R/\fm_i)$ where $\fm_1, \cdots, \fm_n$ are maximal ideals of $R$. By \cite[Lemma 2.3]{w.a.hajikarimi}, an $R$-module $M$ is weakly Artinian if and only if $M$ is weakly Laskerian and $\Ass_R(M) \subseteq \Max(R)$.\label{iii}
\end{enumerate}
\end{remark}

\begin{lemma}\label{wA}
Let $M$ be an $\fa$-torsion $R$-module. If $(0 :_M \fa)$ is a weakly Laskerian $R$-module with support in $\Max(R)$, then $M$ is also weakly Laskerian.
\end{lemma}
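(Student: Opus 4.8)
The plan is to show that $M$ is in fact \emph{weakly Artinian}, and then invoke the characterisation in Remark~\ref{pro}(\ref{iii}) to conclude that it is weakly Laskerian. I would organise the whole argument around a comparison of injective envelopes.

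First I would record that the hypotheses force $(0:_M\fa)$ to be weakly Artinian: since $\Ass_R(0:_M\fa)\subseteq\Supp_R(0:_M\fa)\subseteq\Max(R)$ and $(0:_M\fa)$ is assumed weakly Laskerian, Remark~\ref{pro}(\ref{iii}) applies directly. Consequently its injective envelope has the shape $E_R(0:_M\fa)=\bigoplus_{i=1}^{n}\mu^0(\fm_i,(0:_M\fa))\,E_R(R/\fm_i)$, a \emph{finite} direct sum of injective hulls of residue fields at maximal ideals $\fm_1,\dots,\fm_n$.

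The key step, and the one where the $\fa$-torsion hypothesis enters, is to prove that $(0:_M\fa)$ is an \emph{essential} submodule of $M$. Given any nonzero $x\in M$, I would choose the least integer $k\geq 1$ with $\fa^k x=0$; this exists precisely because $M$ is $\fa$-torsion. Then $\fa^{k-1}x$ is a nonzero submodule of $Rx$, and every $y\in\fa^{k-1}x$ satisfies $\fa y\subseteq\fa^k x=0$, whence $\fa^{k-1}x\subseteq(0:_M\fa)$. Thus $Rx\cap(0:_M\fa)\neq 0$ for each nonzero $x$, and so $(0:_M\fa)$ meets every nonzero submodule of $M$; this is the essentiality.

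Since an essential submodule shares its injective envelope with the ambient module, I would conclude $E_R(M)=E_R(0:_M\fa)=\bigoplus_{i=1}^{n}\mu^0(\fm_i,(0:_M\fa))\,E_R(R/\fm_i)$. Because the $0$th Bass numbers are read off directly from the injective envelope, one has $\mu^0(\fm_i,M)=\mu^0(\fm_i,(0:_M\fa))$, so this is exactly a decomposition of the form $\bigoplus_{i=1}^{n}\mu^0(\fm_i,M)\,E_R(R/\fm_i)$ defining weak Artinianness. Hence $M$ is weakly Artinian, and therefore weakly Laskerian by Remark~\ref{pro}(\ref{iii}). The only delicate point is the essentiality step; once that is established, the passage through injective envelopes is formal.
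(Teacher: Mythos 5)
Your proof is correct, and it differs from the paper's in an instructive way. The paper's own proof is a one-line assembly of citations: it records the equality $\Ass_R(0:_M\fa)=\Ass_R(M)\cap V(\fa)=\Ass_R(M)$, uses the characterization of weak Artinianness in Remark~\ref{pro}(\ref{iii}) to see that $(0:_M\fa)$ is weakly Artinian, and then quotes \cite[Lemma 2.8]{w.a.hajikarimi} for the implication that an $\fa$-torsion module whose $\fa$-socle $(0:_M\fa)$ is weakly Artinian is itself weakly Artinian. You reach the same pivot --- weak Artinianness of $(0:_M\fa)$ via Remark~\ref{pro}(\ref{iii}) --- but instead of citing Hajikarimi's lemma you prove its content directly: $(0:_M\fa)$ is essential in $M$ (your minimal-$k$ argument is exactly where the $\fa$-torsion hypothesis is spent, and it is sound), hence $E_R(M)=E_R(0:_M\fa)$, and the identification $\mu^0(\fm_i,M)=\mu^0\bigl(\fm_i,(0:_M\fa)\bigr)$ follows from the uniqueness of the Matlis decomposition of an injective module over a Noetherian ring into indecomposables, so $M$ satisfies the definition of weak Artinianness and Remark~\ref{pro}(\ref{iii}) finishes. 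In effect your argument is an inlined, self-contained version of the cited lemma, which is the weakly Artinian analogue of Melkersson's classical criterion that an $\fa$-torsion module with Artinian $(0:_M\fa)$ is Artinian. What the paper buys with its citations is brevity; what you buy is transparency about where each hypothesis is used. One small economy on your side: you never need the associated-primes equality the paper records, since $\Ass_R(0:_M\fa)\subseteq\Supp_R(0:_M\fa)\subseteq\Max(R)$ already suffices to invoke Remark~\ref{pro}(\ref{iii}).
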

\begin{proof}
The assertion follows from Remark \ref{pro}(\ref{F3}),  \cite[Lemma~2.8.]{w.a.hajikarimi} and the fact that $$\Ass_R(0 :_M \fa)= \Ass_R(M)\cap V(\fa)= \Ass_R(M).$$
\end{proof}

\begin{lemma}\label{minimax}
Let $\fa$ be an ideal of $R$, $M$ be an $R$-module and $n$ be a non-negative integer such that $\Ext^n_R(R/\fa, M)$ (resp. $\Ext^{n+1}_R(R/\fa, M)$) is in $\mathcal{S}$. If  $\Ext^j_R(R/\fa, H^i_\fa(M))$ is in $\mathcal{S}$ for all $j$ and all $i<n$, then $\Hom_R(R/\fa, H^n_\fa(M))$ (resp. $\Ext^1_R(R/\fa, H^n_\fa(M))$) is in $\mathcal{S}$.
\end{lemma}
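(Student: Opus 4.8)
The plan is to run the Grothendieck spectral sequence attached to the composite functor $\Hom_R(R/\fa,-)\circ\Gamma_\fa$, where $\Gamma_\fa$ denotes the $\fa$-torsion functor. Since every homomorphism out of $R/\fa$ automatically factors through the $\fa$-torsion submodule of its target, one has the identity of functors $\Hom_R(R/\fa,\Gamma_\fa(-))=\Hom_R(R/\fa,-)$. As $\Gamma_\fa$ carries injective $R$-modules to injective (hence $\Hom_R(R/\fa,-)$-acyclic) modules, the composite-functor theorem produces a first-quadrant cohomological spectral sequence
$$E_2^{p,q}=\Ext^p_R(R/\fa,H^q_\fa(M))\Longrightarrow \Ext^{p+q}_R(R/\fa,M).$$
First I would record what the hypothesis buys on the $E_2$-page: the assumption that $\Ext^j_R(R/\fa,H^i_\fa(M))\in\mathcal{S}$ for all $j$ and all $i<n$ says exactly that $E_2^{p,q}\in\mathcal{S}$ whenever $q<n$. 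Because $\mathcal{S}$ is a Serre subcategory it is closed under subquotients, so every higher-page term $E_r^{p,q}$ with $q<n$ also lies in $\mathcal{S}$.

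For the $\Hom$ assertion I focus on the corner term $E_2^{0,n}=\Hom_R(R/\fa,H^n_\fa(M))$. In the first quadrant no differential enters the column $p=0$, so on each page $E_{r+1}^{0,n}=\ker\big(d_r\colon E_r^{0,n}\to E_r^{r,\,n-r+1}\big)$. For every $r\geq 2$ the target has second index $n-r+1<n$ and hence lies in $\mathcal{S}$; thus the successive quotient $E_r^{0,n}/E_{r+1}^{0,n}$, being isomorphic to $\im d_r\subseteq E_r^{r,\,n-r+1}$, lies in $\mathcal{S}$. After finitely many steps the chain stabilizes to $E_\infty^{0,n}$, which is the bottom filtration piece of $\Ext^n_R(R/\fa,M)$ and therefore a submodule of a module in $\mathcal{S}$, so $E_\infty^{0,n}\in\mathcal{S}$. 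Running the finite descending chain $E_2^{0,n}\supseteq E_3^{0,n}\supseteq\cdots\supseteq E_\infty^{0,n}$ backwards and using closure of $\mathcal{S}$ under extensions, I conclude $\Hom_R(R/\fa,H^n_\fa(M))=E_2^{0,n}\in\mathcal{S}$.

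The $\Ext^1$ statement is the same bookkeeping applied to $E_2^{1,n}=\Ext^1_R(R/\fa,H^n_\fa(M))$. Again no differential enters this spot, since the source $E_r^{1-r,\,n+r-1}$ vanishes for $r\geq 2$; each outgoing $d_r$ lands in $E_r^{1+r,\,n-r+1}$, whose second index is $<n$ and which therefore lies in $\mathcal{S}$; and $E_\infty^{1,n}$ is a subquotient of the filtration of $\Ext^{n+1}_R(R/\fa,M)\in\mathcal{S}$, hence in $\mathcal{S}$. The identical extension argument then gives $E_2^{1,n}\in\mathcal{S}$.

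I expect the only genuinely delicate point to be setting up the spectral sequence correctly, namely verifying the acyclicity condition ($\Gamma_\fa$ preserves injectives) and the functor identity $\Hom_R(R/\fa,-)\circ\Gamma_\fa=\Hom_R(R/\fa,-)$ that pins down the abutment, together with the precise indexing of the differentials $d_r\colon E_r^{p,q}\to E_r^{p+r,\,q-r+1}$. Once these are in place, the conclusion is a routine diagram chase exploiting only that $\mathcal{S}$ is closed under submodules, quotients, and extensions.
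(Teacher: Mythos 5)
Your proof is correct, and it is a genuinely self-contained argument where the paper gives none: the paper's entire ``proof'' of Lemma \ref{minimax} is a citation to Lemma 2.3 of Abazari--Bahmanpour \cite{AbaB2015}, where the result is established by induction on $n$ via dimension shifting --- one splits off $\Gamma_\fa(M)$, passes to $\overline{M}=M/\Gamma_\fa(M)$, embeds $\overline{M}$ into an injective module $E$ with $\Gamma_\fa(E)=0$, and applies the inductive hypothesis to $E/\overline{M}$, using only long exact sequences of $\Ext$. Your route through the Grothendieck spectral sequence
$E_2^{p,q}=\Ext^p_R(R/\fa,H^q_\fa(M))\Rightarrow \Ext^{p+q}_R(R/\fa,M)$
(whose setup you justify correctly: $\Gamma_\fa$ preserves injectives over a Noetherian ring, and $\Hom_R(R/\fa,-)\circ\Gamma_\fa=\Hom_R(R/\fa,-)$) trades that elementary induction for heavier machinery, but in return proves both assertions simultaneously by one uniform corner argument, and it exposes clearly that the only properties of $\mathcal{S}$ used are closure under subquotients and extensions. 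One cosmetic slip: under the standard decreasing filtration $\Ext^n=F^0\supseteq F^1\supseteq\cdots\supseteq F^{n+1}=0$ with $E_\infty^{p,n-p}=F^p/F^{p+1}$, the term $E_\infty^{0,n}$ is the \emph{top quotient} $F^0/F^1$ of the abutment, not a submodule; this is immaterial, since a Serre subcategory is closed under all subquotients, so your conclusion $E_\infty^{0,n}\in\mathcal{S}$ (and likewise $E_\infty^{1,n}\in\mathcal{S}$, which is $F^1/F^2$ inside $\Ext^{n+1}_R(R/\fa,M)$) stands as written.
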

\begin{proof}
See \cite[Lemma~2.3]{AbaB2015}.
\end{proof}

\section{MAIN RESULTS}
Let  $n \geq -1$ be  an integer. Recall that an $R$-module $M$ is said to be $FD_{\leq n}$ if there is a finitely generated submodule $N$ of $M$ such that $\dim M/N\leq n$. The concept of $FD_{\leq n}$ modules introduced by  Aghapournahr and  Bahmanpour \cite{AB} as an interesting example of the class of extension modules introduced by Yoshizawa \cite{Yoshizawa}. By definition, any finitely generated $R$-module and any $R$-module with dimension at most $n$ is $FD_{\leq n}$. The class of all $FD_{\leq n}$ $R$-modules forms a Serre subcategory of the category of all $R$-modules by \cite[Lemma~2.3]{AB2014}.

As the first main result of this paper, we are going to prove the following theorem which states some conditions for the weakly cofiniteness of local cohomology modules.

\begin{theo}\label{n4}
Let  $M$ be an $R$-module and $s\geq 1$  be a positive integer such that $\Ext^j_R(R/\fa, M)$ is weakly Laskerian for all $j\leq s$ and the $R$-module $H^i_\fa(M)$ is $FD_{\leq 1}$ for all $i < s$. Then the following statements hold:
\begin{enumerate}
  \item The $R$-module $H^i_\fa(M)$ is $\fa$-weakly cofinite for all $i <s$.\label{n41}
  \item For every $FD_{\leq 0}$ submodule $X$ of $H^s_\fa(M)$, the $R$-module $\Ext^i_R(R/\fa, H^s_\fa(M)/X)$ is weakly Laskerian for $i=0,1$. In particular, the set $\Ass_R(H^s_\fa(M)/X)$ is finite.\label{n42}
\end{enumerate}
\end{theo}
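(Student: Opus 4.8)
The plan is to prove both parts simultaneously by induction on $s$, exploiting the interplay between the vanishing/finiteness of the low $\Ext$ modules and the $FD_{\leq 1}$ hypothesis on the local cohomology. The key structural tool will be Lemma \ref{minimax}, which (with $\mathcal{S}$ taken to be the Serre category of weakly Laskerian modules) lets us transfer weak-Laskerianness of $\Ext^s_R(R/\fa,M)$ and $\Ext^{s+1}_R(R/\fa,M)$ into weak-Laskerianness of $\Hom_R(R/\fa,H^s_\fa(M))$ and $\Ext^1_R(R/\fa,H^s_\fa(M))$, provided all the lower local cohomology modules have already been shown to be $\fa$-weakly cofinite. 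This is exactly what part (\ref{n41}) supplies for the inductive step, so the two statements feed one another naturally.

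For the base and inductive mechanism, first I would observe that since $H^i_\fa(M)$ is $FD_{\leq 1}$ for $i<s$, there is a finitely generated submodule $N_i$ with $\dim\bigl(H^i_\fa(M)/N_i\bigr)\leq 1$. The strategy is to reduce the weak cofiniteness of $H^i_\fa(M)$ to that of a module of dimension at most one and then apply Proposition \ref{In2} (Proposition \ref{In2} in the excerpt, i.e.\ the $\dim\leq 1$ equivalence). Concretely, from the short exact sequence $0\to N_i\to H^i_\fa(M)\to H^i_\fa(M)/N_i\to 0$ and Lemma \ref{pro1} applied to the finitely generated piece, the weak cofiniteness of $H^i_\fa(M)$ is equivalent to that of the quotient $H^i_\fa(M)/N_i$, which has dimension $\leq 1$. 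To invoke Proposition \ref{In2} on this quotient I must verify that $\Hom_R(R/\fa,\,\cdot\,)$ and $\Ext^1_R(R/\fa,\,\cdot\,)$ of it are weakly Laskerian; here Lemma \ref{minimax}, combined with the inductive hypothesis that $H^{i'}_\fa(M)$ is $\fa$-weakly cofinite for all $i'<i$, delivers precisely these two Ext-modules for $H^i_\fa(M)$, and Lemma \ref{pro1} again strips off the finitely generated $N_i$ without cost.

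For part (\ref{n42}), once (\ref{n41}) is established for all $i<s$, the hypotheses of Lemma \ref{minimax} are met with $n=s$: every $\Ext^j_R(R/\fa,H^i_\fa(M))$ for $i<s$ is weakly Laskerian (this is what $\fa$-weak cofiniteness means, noting $H^i_\fa(M)$ is $\fa$-torsion), and both $\Ext^s_R(R/\fa,M)$ and $\Ext^{s+1}_R(R/\fa,M)$ are weakly Laskerian by hypothesis. Hence $\Hom_R(R/\fa,H^s_\fa(M))$ and $\Ext^1_R(R/\fa,H^s_\fa(M))$ are weakly Laskerian. To pass to the quotient $H^s_\fa(M)/X$ by an $FD_{\leq 0}$ submodule $X$, I would split $X$ through its finitely generated submodule $X'$ with $\dim(X/X')\leq 0$, handle $X'$ via Lemma \ref{pro1} and the long exact sequence of $\Ext^\bullet_R(R/\fa,-)$, and control the zero-dimensional piece $X/X'$ using Remark \ref{pro} together with Lemma \ref{wA} (its support lies in $\Max(R)$). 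The finiteness of $\Ass_R(H^s_\fa(M)/X)$ then follows since $\Ass_R$ of an $\fa$-torsion module is read off from $\Ass_R\!\bigl(\Hom_R(R/\fa,-)\bigr)$, which is weakly Laskerian and hence has finite associated-prime set.

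The main obstacle I anticipate is the bookkeeping in the inductive step: Lemma \ref{minimax} only outputs the $j=0$ and $j=1$ Ext-modules of $H^s_\fa(M)$, whereas $\fa$-weak cofiniteness requires all $\Ext^j_R(R/\fa,-)$ to be weakly Laskerian. The resolution—and the crux of the argument—is that for a module of dimension $\leq 1$ Proposition \ref{In2} upgrades the two low Ext-modules to all of them, so the $FD_{\leq 1}$ hypothesis is precisely what closes this gap; verifying that the dimension reduction via the finitely generated submodule $N_i$ genuinely lands us in the $\dim\leq 1$ regime where Proposition \ref{In2} applies is the step that demands the most care.
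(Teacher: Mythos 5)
Your proposal follows essentially the same route as the paper's proof. For part (\ref{n41}) your induction is the paper's: Lemma \ref{minimax}, fed by the inductive hypothesis that the lower local cohomology modules are $\fa$-weakly cofinite, produces $\Hom_R(R/\fa,H^i_\fa(M))$ and $\Ext^1_R(R/\fa,H^i_\fa(M))$ weakly Laskerian, and the $FD_{\leq 1}$ hypothesis upgrades these two modules to all of them; the paper packages this upgrade as a separate statement (Proposition \ref{n3}), proved by exactly your reduction --- strip off a finitely generated submodule via the long exact sequence of $\Ext_R^\bullet(R/\fa,-)$ and apply Proposition \ref{n2} to the dimension-$\leq 1$ quotient. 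Likewise, your treatment of the submodule $X$ in part (\ref{n42}) (split off a finitely generated piece, control the zero-dimensional quotient with Lemma \ref{wA}) is precisely the paper's Lemma \ref{FD0}, and the rest is the same long exact sequence argument. Note that part (\ref{n41}) only ever uses $\Ext^j_R(R/\fa,M)$ for $j\le i+1\le s$, so that half is fully justified.

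There is, however, one genuine flaw in your write-up of part (\ref{n42}): you assert that ``both $\Ext^s_R(R/\fa,M)$ and $\Ext^{s+1}_R(R/\fa,M)$ are weakly Laskerian by hypothesis.'' The hypothesis only covers $j\le s$; nothing at all is assumed about $\Ext^{s+1}_R(R/\fa,M)$. This is not a cosmetic slip: the ``resp.''\ half of Lemma \ref{minimax}, which is what delivers $\Ext^1_R(R/\fa,H^s_\fa(M))$, requires exactly $\Ext^{s+1}_R(R/\fa,M)\in\mathcal{S}$, and this input cannot be avoided, since (modulo contributions from the lower, already weakly cofinite local cohomology modules) $\Ext^1_R(R/\fa,H^s_\fa(M))$ arises as a subquotient of $\Ext^{s+1}_R(R/\fa,M)$ in the relevant spectral sequence. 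Taking $X=0$, the $i=1$ claim of part (\ref{n42}) is precisely that $\Ext^1_R(R/\fa,H^s_\fa(M))$ is weakly Laskerian, so your argument establishes it only under the stronger hypothesis $j\le s+1$. It is worth saying that the paper's own proof commits the same over-reach silently --- it invokes Lemma \ref{minimax} for $\Ext^1_R(R/\fa,H^s_\fa(M))$ without having $\Ext^{s+1}_R(R/\fa,M)$ available --- so your proposal faithfully reproduces the paper's argument, gap included; but in your text the unavailable input is explicitly (and incorrectly) attributed to the hypothesis, which you should not do. The $i=0$ statement of part (\ref{n42}), and all of part (\ref{n41}), are unaffected.
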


We divide the proof of Theorem \ref{n4} into a sequence of lemmas and propositions.

\begin{lemma}\label{FD0}
Let $M$ be an $\fa$-torsion $FD_{\leq 0}$ $R$-module. Then the following statements are equivalent:
\begin{enumerate}
  \item $M$ is weakly Laskerian.\label{F1}
  \item $M$ is $\fa$-weakly cofinite. \label{F2}
  \item The $R$-module $\Hom_R(R/\fa,M)$ is weakly Laskerian.\label{F3}
\end{enumerate}
\end{lemma}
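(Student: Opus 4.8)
The plan is to prove the cycle of implications $(1)\Rightarrow(2)\Rightarrow(3)\Rightarrow(1)$, since the first two are essentially formal and the content sits entirely in the last one. For $(1)\Rightarrow(2)$, I would observe that $M$ being $\fa$-torsion already yields $\Supp_R(M)\subseteq V(\fa)$, so the only thing to verify is that each $\Ext^i_R(R/\fa,M)$ is weakly Laskerian; as $R/\fa$ is finitely generated and the weakly Laskerian modules form a Serre subcategory, this is immediate from Lemma \ref{pro1}. The implication $(2)\Rightarrow(3)$ is just the case $i=0$ of the definition of $\fa$-weak cofiniteness, because $\Hom_R(R/\fa,M)=\Ext^0_R(R/\fa,M)$.

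The real work is in $(3)\Rightarrow(1)$. Since $M$ is $FD_{\leq 0}$, I would fix a finitely generated submodule $N\subseteq M$ with $\dim M/N\leq 0$ and consider the short exact sequence $0\to N\to M\to M/N\to 0$. Because $N$ is finitely generated it is weakly Laskerian, so by closure of the weakly Laskerian class under extensions (Remark \ref{pro}(\ref{ii})) it suffices to show that $M/N$ is weakly Laskerian. The advantage gained by passing to $M/N$ is that $\dim M/N\leq 0$ forces $\Supp_R(M/N)\subseteq \Max(R)$, which is precisely the support hypothesis demanded by Lemma \ref{wA}; note also that $M/N$ is again $\fa$-torsion as a quotient of the $\fa$-torsion module $M$.

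To apply Lemma \ref{wA} to $M/N$, I must show that $(0:_{M/N}\fa)=\Hom_R(R/\fa,M/N)$ is weakly Laskerian. This is the crux of the argument. Applying $\Hom_R(R/\fa,-)$ to the short exact sequence above gives the exact sequence
$$\Hom_R(R/\fa,M)\longrightarrow \Hom_R(R/\fa,M/N)\longrightarrow \Ext^1_R(R/\fa,N).$$
The leftmost module is weakly Laskerian by hypothesis $(3)$, so the image of the first map is weakly Laskerian; and $\Ext^1_R(R/\fa,N)$ is finitely generated, hence weakly Laskerian, because $N$ is finitely generated. Thus $\Hom_R(R/\fa,M/N)$ is an extension of a weakly Laskerian submodule by a weakly Laskerian quotient, and Remark \ref{pro}(\ref{ii}) shows it is weakly Laskerian. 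Lemma \ref{wA} then yields that $M/N$ is weakly Laskerian, whence so is $M$.

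The main obstacle, as I see it, is this middle step: the hypothesis only controls $\Hom_R(R/\fa,M)$, yet we must deduce full weak Laskerianness of $M$. The device that resolves it is the reduction modulo the finitely generated submodule $N$, which simultaneously provides the maximal-support condition needed to invoke Lemma \ref{wA} and renders the connecting term $\Ext^1_R(R/\fa,N)$ finitely generated, so that weak Laskerianness propagates through the $\Hom$-exact sequence.
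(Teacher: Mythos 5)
Your proposal is correct and follows essentially the same route as the paper: the paper also writes $M$ as an extension $0\to F\to M\to D\to 0$ with $F$ finitely generated and $\dim D\leq 0$, traps $\Hom_R(R/\fa,D)$ in the $\Hom$-exact sequence between $\Hom_R(R/\fa,M)$ and $\Ext^1_R(R/\fa,F)$, and then invokes Lemma \ref{wA} to conclude. Your write-up is in fact slightly more explicit than the paper's (which leaves the sandwich argument and the final extension-closure step implicit), but there is no difference in substance.
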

\begin{proof}
($\ref{F1})\Rightarrow (\ref{F2}$) and ($\ref{F2})\Rightarrow (\ref{F3}$) are clear. For ($\ref{F3})\Rightarrow (\ref{F1}$), by definition we have the long exact sequence $$0\rightarrow \Hom(R/\fa, F)\rightarrow \Hom(R/\fa, M)\rightarrow \Hom(R/\fa, D)\rightarrow \Ext^1_R(R/\fa, F)\rightarrow \cdots$$ where $F$ is finitely generated and $D$ is an $R$-module with $\dim D\leq 0$. Thus, by assumption the $R$-module $\Hom(R/\fa, D)$ is a weakly Laskerian $R$-module with support in $\Max(R)$. Hence, the assertion follows from Lemma \ref{wA}.
\end{proof}

\begin{lemma}\label{n1}
Let $M$ be an $\fa$-torsion $R$-module such that $\dim M\leq 1$. Then $M$ is $\fa$-weakly cofinite if and only if the $R$-modules $\Hom_R(R/\fa,M)$ and $\Ext^1_R(R/\fa,M)$ are weakly Laskerian.
\end{lemma}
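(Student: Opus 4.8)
The forward implication is immediate: if $M$ is $\fa$-weakly cofinite then all the modules $\Ext^i_R(R/\fa,M)$ are weakly Laskerian, in particular those with $i=0,1$. For the converse, assume $\Hom_R(R/\fa,M)$ and $\Ext^1_R(R/\fa,M)$ are weakly Laskerian. Since $M$ is $\fa$-torsion we have $\Ass_R(M)=\Ass_R(0:_M\fa)=\Ass_R\Hom_R(R/\fa,M)$, which is finite; hence $M$ has only finitely many associated primes, among which finitely many, say $\fp_1,\dots,\fp_k$, are one-dimensional and the rest are maximal. My target is to prove that $M$ is weakly Laskerian, for then Lemma \ref{pro1} applied to the finitely generated module $R/\fa$ shows $\Ext^i_R(R/\fa,M)$ is weakly Laskerian for every $i$, and $\Supp_R(M)\subseteq V(\fa)$ gives the $\fa$-weak cofiniteness.

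First I would peel off the zero-dimensional part. Let $M_0$ be the (unique) largest submodule of $M$ of dimension $\le 0$, so that $M_0$ is an $\fa$-torsion $FD_{\leq 0}$ module with $(0:_{M_0}\fa)\subseteq(0:_M\fa)=\Hom_R(R/\fa,M)$ weakly Laskerian; by Lemma \ref{FD0}, $M_0$ is weakly Laskerian and $\fa$-weakly cofinite. Writing $\overline M=M/M_0$ and applying $\Hom_R(R/\fa,-)$ to $0\to M_0\to M\to\overline M\to0$, the weak cofiniteness of $M_0$ together with Remark \ref{pro}(\ref{ii}) transfers the hypotheses to $\overline M$: both $\Hom_R(R/\fa,\overline M)$ and $\Ext^1_R(R/\fa,\overline M)$ are weakly Laskerian. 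By maximality of $M_0$ the quotient $\overline M$ has no nonzero submodule of dimension $0$, so every associated prime of $\overline M$ is one-dimensional; thus $\Ass_R(\overline M)\subseteq\{\fp_1,\dots,\fp_k\}$, and since $M_0$ is already weakly Laskerian it suffices (by Remark \ref{pro}(\ref{ii})) to prove that $\overline M$ is weakly Laskerian.

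Next I would reduce the dimension by a parameter. The primes $\fp_1,\dots,\fp_k$ are non-maximal, so by prime avoidance I can choose $x\in R\setminus\bigcup_{j}\fp_j$. Then $x$ is a nonzerodivisor on $\overline M$ and, since $V(x)$ misses the minimal primes $\fp_j$ of $\Supp_R(\overline M)$, the module $\overline M/x\overline M$ is $\fa$-torsion of dimension $\le 0$. Feeding the short exact sequence $0\to\overline M\xrightarrow{x}\overline M\to\overline M/x\overline M\to0$ into $\Hom_R(R/\fa,-)$ exhibits $\Hom_R(R/\fa,\overline M/x\overline M)$ as an extension built from a quotient of $\Hom_R(R/\fa,\overline M)$ and a submodule of $\Ext^1_R(R/\fa,\overline M)$; hence it is weakly Laskerian, and Lemma \ref{FD0} shows $\overline M/x\overline M$ is weakly Laskerian.

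The crux is then to lift weak Laskerianness from $\overline M/x\overline M$ back to $\overline M$. This is the step I expect to be the main obstacle: a single element $x$ cannot detect the (possibly infinitely many) maximal ideals of $\Supp_R(\overline M)$ lying off $V(x)$, so the reduction modulo $x$ alone does not control them, and an honest Melkersson-type induction is needed, combining the $x$-reduction with the $\fa$-torsion structure and Lemma \ref{wA} to absorb the support at the maximal ideals. Once $\overline M$, and therefore $M$, is shown to be weakly Laskerian, the conclusion follows as explained in the first paragraph.
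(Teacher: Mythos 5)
Your reductions up to the point you call the crux are essentially correct: splitting off the largest zero-dimensional submodule $M_0$, transferring the hypotheses to $\overline M=M/M_0$ via Lemma \ref{pro1} and Remark \ref{pro}, and deducing from Lemma \ref{FD0} that $\overline M/x\overline M$ is weakly Laskerian all work (your inclusion $\Ass_R(\overline M)\subseteq\{\fp_1,\dots,\fp_k\}$ needs a short extra argument, but it is dispensable: $\Ass_R(\overline M)=\Ass_R(\Hom_R(R/\fa,\overline M))$ is finite, so you may simply choose $x$ avoiding it). The real problem is the final lifting step, and it is not an obstacle that ``an honest Melkersson-type induction'' could overcome: it is impossible, because your announced target --- that the hypotheses force $M$ to be weakly Laskerian --- is false. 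Take $R=k[x,y]$ over a field, $\fa=(x)$, and $M=R_x/R\cong H^1_{(x)}(R)$. This $M$ is $\fa$-torsion of dimension $1$, the module $\Hom_R(R/\fa,M)=(0:_M x)=x^{-1}R/R\cong R/(x)$ is finitely generated, and $\Ext^1_R(R/\fa,M)\cong M/xM=0$ since $M$ is $x$-divisible; so the hypotheses hold, and the conclusion of the lemma holds as well ($M$ is even $\fa$-cofinite, since $\Ext^i_R(R/\fa,M)=0$ for $i\geq 2$). Yet $M$ is not weakly Laskerian: every finitely generated submodule $N$ of $M$ lies in $x^{-n}R/R$ for some $n$, so $M/N$ surjects onto $M/(x^{-n}R/R)\cong R_x/R$, whose support $V(x)$ is infinite; thus $M$ is not FSF, hence not weakly Laskerian by \cite[Theorem 3.3]{Bahmanpour}. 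In this example $M_0=0$ and $\overline M=M$, and taking your element $x$ to be $y$ one gets $\overline M/y\overline M\cong k[x]_x/k[x]$, which is Artinian and therefore weakly Laskerian, while $\overline M$ is not: the implication you left open fails exactly here.

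The moral is that for $\fa$-torsion modules of dimension at most one, $\fa$-weak cofiniteness is strictly weaker than weak Laskerianness, so no correct proof can pass through the intermediate claim that $M$ itself is weakly Laskerian; what has to be propagated through your reduction (past $M_0$, $\overline M$, and the parameter $x$) is control of the modules $\Ext^i_R(R/\fa,M)$ themselves, i.e.\ weak cofiniteness. Note also that the paper offers no internal argument to compare against: its proof of Lemma \ref{n1} is a citation of \cite[Proposition 3.2]{BNSwl}, and any repair of your approach would in effect have to reconstruct the argument given there rather than strengthen the conclusion to weak Laskerianness of $M$.
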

\begin{proof}
See \cite[Proposition 3.2]{BNSwl}.
\end{proof}

In the following proposition that is a generalization of \cite[Proposition 3.2]{BNSwl}, we prove the assertion of Lemma \ref{n1}  for any $R$-module $M$ with $\dim M\leq 1$ not necessarily $\fa$-torsion.

\begin{prop}\label{n2}
Let $M$ be a non-zero $R$-module (not necessary $\fa$-torsion) such that $\dim M\leq 1$. Then the following conditions are equivalent:
\begin{enumerate}
  \item $H^i_\fa (M)$ is $\fa$-weakly cofinite for all $i\geq 0$;\label{n21}
  \item The $R$-module $\Ext^i_R(R/\fa,M)$ is weakly Laskerian for all $i\geq 0$;\label{n22}
  \item The $R$-modules $\Hom_R(R/\fa,M)$ and $\Ext^1_R(R/\fa,M)$ are weakly Laskerian.\label{n23}
\end{enumerate}
\end{prop}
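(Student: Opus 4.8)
The plan is to prove this as a cycle of implications $(\ref{n22})\Rightarrow(\ref{n21})\Rightarrow(\ref{n23})\Rightarrow(\ref{n22})$, reducing everything to the already-established $\fa$-torsion case in Lemma \ref{n1}. The central device will be the short exact sequence
\[
0\longrightarrow \Gamma_\fa(M)\longrightarrow M\longrightarrow M/\Gamma_\fa(M)\longrightarrow 0,
\]
whose quotient $\overline{M}:=M/\Gamma_\fa(M)$ is $\fa$-torsion-free, together with the fact that both $\Gamma_\fa(M)$ and $\overline{M}$ have dimension at most $1$ (being a submodule and a quotient of $M$). Throughout I will invoke the long exact sequence in $\Ext^\bullet_R(R/\fa,-)$ attached to this short exact sequence, and the fact that weakly Laskerian modules form a Serre subcategory (Remark \ref{pro}(\ref{ii})), so membership passes freely across the outer two terms once it is known for the remaining term.

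First I would handle $(\ref{n22})\Rightarrow(\ref{n21})$. The key input is Lemma \ref{minimax}: taking $\mathcal{S}$ to be the class of weakly Laskerian modules, induction on $n$ shows that $\Ext^j_R(R/\fa,H^i_\fa(M))$ is weakly Laskerian in the relevant low degrees. Since $\dim M\le 1$, one has $H^i_\fa(M)=0$ for $i\ge 2$, so the only modules in play are $H^0_\fa(M)=\Gamma_\fa(M)$ and $H^1_\fa(M)$; both are $\fa$-torsion of dimension at most $1$, so by Lemma \ref{n1} it suffices to check that $\Hom_R(R/\fa,-)$ and $\Ext^1_R(R/\fa,-)$ applied to each are weakly Laskerian, which is exactly what Lemma \ref{minimax} delivers from the hypothesis in (\ref{n22}). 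For $(\ref{n21})\Rightarrow(\ref{n23})$: assuming $H^0_\fa(M)=\Gamma_\fa(M)$ and $H^1_\fa(M)$ are $\fa$-weakly cofinite, Lemma \ref{n1} gives that $\Hom_R(R/\fa,-)$ and $\Ext^1_R(R/\fa,-)$ of each are weakly Laskerian; I then transfer this to $M$ itself. The torsion-free quotient $\overline{M}$ satisfies $\Gamma_\fa(\overline{M})=0$, hence $\Hom_R(R/\fa,\overline{M})=0$ is trivially weakly Laskerian, and a comparison of the derived functors of $\Gamma_\fa$ with $\Ext^\bullet_R(R/\fa,-)$ on $\overline{M}$ together with the long exact sequence coming from the torsion sequence lets me conclude that $\Hom_R(R/\fa,M)$ and $\Ext^1_R(R/\fa,M)$ are weakly Laskerian.

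The implication $(\ref{n23})\Rightarrow(\ref{n22})$ is where the real work lies, and I expect it to be the main obstacle. Condition (\ref{n23}) only controls the two lowest $\Ext$ modules, whereas (\ref{n22}) asserts weak Laskerianness in \emph{all} cohomological degrees, so I must propagate information upward. The strategy is again to split off the torsion submodule and work with $\overline{M}=M/\Gamma_\fa(M)$, for which I would show $\Ext^i_R(R/\fa,\overline{M})$ vanishes or is weakly Laskerian for $i\ge 2$ using $\dim\overline{M}\le 1$ and the $\fa$-torsion-freeness; the presence of a nonzerodivisor on $\fa$ acting on $\overline{M}$ lets one set up an induction via the exact sequence $0\to\overline{M}\xrightarrow{x}\overline{M}\to\overline{M}/x\overline{M}\to 0$, where $\overline{M}/x\overline{M}$ has smaller dimension and is eventually $\fa$-torsion. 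This dimension-shifting argument, combined with the Serre property of weakly Laskerian modules, should upgrade the two-step hypothesis of (\ref{n23}) into the full statement of (\ref{n22}). The delicate point is ensuring that the requisite $R$-regular element inside $\fa$ exists after a suitable reduction and that the induction closes for the non-torsion-free pieces; I would manage this by first disposing of $\Gamma_\fa(M)$ (which is $\fa$-torsion of dimension $\le 1$, hence controlled directly by Lemma \ref{n1} once its $\Hom$ and $\Ext^1$ are shown weakly Laskerian from (\ref{n23})) and then treating $\overline{M}$ separately.
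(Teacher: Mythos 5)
Your implications (\ref{n22})$\Rightarrow$(\ref{n21}) and (\ref{n21})$\Rightarrow$(\ref{n23}) are sound: the first is exactly Lemma \ref{minimax} (applied for $n=0,1$, noting that $H^i_\fa(M)=0$ for $i\geq 2$ by Grothendieck vanishing and that $\dim H^i_\fa(M)\leq 1$) combined with Lemma \ref{n1}, and the second follows from the torsion sequence together with the standard isomorphism $\Ext^1_R(R/\fa,\overline{M})\cong \Hom_R\left(R/\fa,H^1_\fa(\overline{M})\right)$ for the $\fa$-torsion-free module $\overline{M}$. The genuine gap is in the implication you yourself flag as the real work, (\ref{n23})$\Rightarrow$(\ref{n22}): your plan rests on choosing $x\in\fa$ that is a nonzerodivisor on $\overline{M}=M/\Gamma_\fa(M)$. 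But $M$ is an arbitrary module, so $\Ass_R(\overline{M})$ may be infinite, prime avoidance is unavailable, and such an $x$ genuinely need not exist. Concretely, let $(R,\fm)=k[[u,v]]$, $\fa=\fm$, and $M=\bigoplus_{\mathrm{ht}\,\fp=1}R/\fp$, the sum over all height-one primes. Then $\dim M=1$ and $\Gamma_\fm(M)=0$, so $\overline{M}=M$; every element of $\fm$ lies in some height-one prime (Krull's principal ideal theorem), hence is a zerodivisor on $M$. Yet condition (\ref{n23}) holds for this $M$: one has $\Hom_R(R/\fm,M)=0$ since each $(0:_{R/\fp}\fm)=0$, and $\Ext^1_R(R/\fm,M)\cong\bigoplus_\fp\Ext^1_R(R/\fm,R/\fp)$ (the finite free resolution of $R/\fm$ commutes with direct sums) is annihilated by $\fm$, i.e.\ a $k$-vector space, hence weakly Laskerian. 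So the proposition applies to this module, but your induction cannot even start on it, and the ``suitable reduction'' you defer to is precisely what is missing: hypothesis (\ref{n23}) imposes no finiteness on $\Ass_R(\overline{M})$ away from $V(\fa)$.

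The paper's proof of this direction avoids regular elements altogether, and this is the idea your attempt lacks. After disposing of $\Gamma_\fa(M)$ exactly as you do (via the torsion sequence and Lemma \ref{n1}), it applies Lemma \ref{minimax} to conclude that $\Hom_R(R/\fa,H^1_\fa(M))$ is weakly Laskerian, and then makes a support observation: if $\fp\in\Supp_R(H^1_\fa(M))$ had $\dim R/\fp=1$, then $\fp$ would be minimal in $\Supp_R(M)$, so $\dim M_\fp=0$ and $(H^1_\fa(M))_\fp=0$ by Grothendieck vanishing, a contradiction; hence $\Supp_R(H^1_\fa(M))\subseteq\Max(R)$. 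Lemma \ref{wA} (the weakly-Artinian criterion) then shows that $H^1_\fa(M)$ itself is weakly Laskerian, hence $\fa$-weakly cofinite, which closes the cycle. If you replace your dimension-shifting paragraph by this support-plus-Lemma-\ref{wA} argument, your overall cyclic scheme goes through.
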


\begin{proof}
(\ref{n21}) $\Rightarrow$ (\ref{n22}) follows from \cite[Proposition~3.9]{Mel}.

(\ref{n22}) $\Rightarrow$ (\ref{n23}) is clear.

(\ref{n23}) $\Rightarrow$ (\ref{n21}): By Grothendieck's Vanishing Theorem \cite[Theorem~6.1.2]{BS}, we only need to show that $\Gamma_\fa (M)$ and $H^1_\fa(M)$ are $\fa$-weakly cofinite. To do this, consider the exact sequence $$0\rightarrow \Gamma_\fa (M)\rightarrow M \rightarrow  M/\Gamma_\fa (M)\rightarrow  0$$
which induces the exact sequence
\begin{align*}
    0&\rightarrow \Hom_R(R/\fa, \Gamma_\fa (M))\rightarrow \Hom_R(R/\fa, M)\rightarrow \Hom_R(R/\fa, M/\Gamma_\fa (M))\\ &\rightarrow \Ext^1_R(R/\fa, \Gamma_\fa (M)) \rightarrow \Ext^1_R(R/\fa, M)\rightarrow \cdots.
  \end{align*}

Hence, as $\Hom_R(R/\fa, M/\Gamma_\fa (M))=0$, we infer that $\Hom_R(R/\fa, \Gamma_\fa (M))$ and $\Ext^1_R(R/\fa, \Gamma_\fa (M))$ are weakly Laskerian $R$-modules by assumption. Thus $\Gamma_\fa (M)$ is $\fa$-weakly cofinite by Lemma \ref{n1}. This enable us to deduce that  $\Hom_R(R/\fa, H^1_\fa(M))$ is weakly Laskerian by assumption and Lemma \ref{minimax}. Now, let $\fp \in \Supp_R(H^1_\fa(M))$. Then $\fp \in \Supp_R(M)$ and $(H^1_\fa(M))_\fp\neq 0$.
Since $\dim M\leq 1$, we have either $\dim R/\fp=0$ or $\dim R/\fp=1$. If $\dim R/\fp=1$, then $\fp$ is a minimal element of $\Supp_R(M)$ and so $\dim M_\fp=0$. Thus $(H^1_\fa(M))_\fp = 0$ by Grothendieck's Vanishing Theorem, which is impossible. Therefore, $\dim R/\fp=0$ and so $\fp$ is a maximal ideal of $R$. This implies that $\Hom_R(R/\fa, H^1_\fa(M))$ is a weakly Laskerian $R$-module with support in $\Max(R)$. Hence, $H^1_\fa(M)$ is weakly Laskerian by Lemma \ref{wA}. This completes the proof.
\end{proof}




\begin{prop}\label{n3}
Let $M$ be an $FD_{\leq 1}$ $R$-module. Then $\Ext^i_R(R/\fa,M)$ is weakly Laskerian for all $i\geq0$ if and only if $\Hom_R(R/\fa,M)$ and $\Ext^1_R(R/\fa,M)$ are weakly Laskerian.
\end{prop}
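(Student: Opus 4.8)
The plan is to prove Proposition~\ref{n3} by reducing from an arbitrary $FD_{\leq 1}$ module to the torsion case already handled in Proposition~\ref{n2}. Since $M$ is $FD_{\leq 1}$, by definition there is a finitely generated submodule $N \subseteq M$ with $\dim M/N \leq 1$. The forward implication is trivial, so I focus on the converse: assume $\Hom_R(R/\fa, M)$ and $\Ext^1_R(R/\fa, M)$ are weakly Laskerian and deduce that $\Ext^i_R(R/\fa, M)$ is weakly Laskerian for all $i \geq 0$. First I would exploit that $N$ is finitely generated, so by Lemma~\ref{pro1} the modules $\Ext^i_R(R/\fa, N)$ lie in any Serre subcategory, in particular are weakly Laskerian for all $i$. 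The strategy is then to transfer the low-degree hypothesis from $M$ to the quotient $M/N$, invoke Proposition~\ref{n2} on $M/N$ (which has dimension at most $1$), and finally pass back up to $M$.

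Concretely, I would apply the long exact sequence in $\Ext_R(R/\fa, -)$ to the short exact sequence
\begin{equation*}
0 \rightarrow N \rightarrow M \rightarrow M/N \rightarrow 0.
\end{equation*}
Because $\Ext^i_R(R/\fa, N)$ is weakly Laskerian in every degree and the class of weakly Laskerian modules is a Serre subcategory (closed under subobjects, quotients and extensions by Remark~\ref{pro}(\ref{ii})), the hypotheses on $\Hom_R(R/\fa, M)$ and $\Ext^1_R(R/\fa, M)$ force $\Hom_R(R/\fa, M/N)$ and $\Ext^1_R(R/\fa, M/N)$ to be weakly Laskerian as well. Indeed, a piece of the sequence reads
\begin{equation*}
\Ext^i_R(R/\fa, M) \rightarrow \Ext^i_R(R/\fa, M/N) \rightarrow \Ext^{i+1}_R(R/\fa, N),
\end{equation*}
so for $i = 0, 1$ the middle term sits between a weakly Laskerian module and a weakly Laskerian module, hence is weakly Laskerian. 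Now $M/N$ satisfies $\dim M/N \leq 1$, so Proposition~\ref{n2} (the implication \ref{n23} $\Rightarrow$ \ref{n22}) applies and gives that $\Ext^i_R(R/\fa, M/N)$ is weakly Laskerian for \emph{all} $i \geq 0$.

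To finish I would run the same long exact sequence in the other direction: for every $i \geq 0$ the segment
\begin{equation*}
\Ext^i_R(R/\fa, N) \rightarrow \Ext^i_R(R/\fa, M) \rightarrow \Ext^i_R(R/\fa, M/N)
\end{equation*}
has its two outer terms weakly Laskerian (the left by Lemma~\ref{pro1}, the right by the previous paragraph), so the Serre property yields that $\Ext^i_R(R/\fa, M)$ is weakly Laskerian for all $i \geq 0$, as required.

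I expect the only delicate point to be the bookkeeping in the first direction: verifying that weak Laskerianness of $\Hom_R(R/\fa, M)$ and $\Ext^1_R(R/\fa, M)$ genuinely descends to $M/N$ in degrees $0$ and $1$. This is where one must be careful that the relevant connecting maps land in, or receive from, the degrees of $\Ext_R(R/\fa, N)$ that are controlled, rather than accidentally needing information about $M$ in a degree where no hypothesis is available. Once that transfer is established, the remaining steps are purely formal applications of the Serre subcategory property to the long exact sequence, and the main content is really carried by Proposition~\ref{n2} applied to the dimension-$\leq 1$ quotient.
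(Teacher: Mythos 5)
Your proof is correct and follows essentially the same route as the paper's: both split $M$ via the $FD_{\leq 1}$ definition into a finitely generated submodule and a quotient of dimension at most one, transfer the degree $0$ and $1$ hypotheses to the quotient through the long exact sequence, invoke Proposition~\ref{n2} on that quotient, and then pass back to $M$ by the Serre property. The only difference is notational ($N$ and $M/N$ versus the paper's $F$ and $D$), so there is nothing to change.
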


\begin{proof}
The sufficiency is clear. For the necessity,  by definition, there exists an exact sequence  $0\rightarrow F\rightarrow M\rightarrow D\rightarrow 0$ of $R$-modules where $F$ is finitely generated and $\dim D\leq 1$.  This induces the long exact sequence
\begin{multline*}
    0\rightarrow \Hom_R(R/\fa,F)\rightarrow \Hom_R(R/\fa,M)\rightarrow \Hom_R(R/\fa,D)\rightarrow \Ext^1_R(R/\fa,F) \\
    \rightarrow \Ext^1_R(R/\fa,M)\rightarrow \Ext^1_R(R/\fa,D)\rightarrow \Ext^2_R(R/\fa,F)\rightarrow \cdots
\end{multline*}
which implies that $\Hom_R(R/\fa, D)$ and $\Ext^1(R/\fa, D)$ are weakly Laskerian.  Thus $\Ext^i_R(R/\fa, D)$ is weakly Laskerian for all $i\geq0$ by Theorem \ref{n2}. Consequently, $\Ext^i_R(R/\fa, M)$ is weakly Laskerian for all $i\geq0$, as desired.
\end{proof}

Now, we are in the position to state the proof of Theorem \ref{n4}.\\

\noindent \textbf{Proof of Theorem \ref{n4}:}
(\ref{n41}) We prove the assertion by induction on $s$. For $s = 1$, by assumption, $\Gamma_\fa(M)$ is $FD_{\leq 1}$ and $\Hom_R(R/\fa, \Gamma_\fa(M))=\Hom_R(R/\fa, M)$ is weakly Laskerian. So, in view of Proposition \ref{n3}, it is sufficient to prove that $\Ext^1_R(R/\fa,\Gamma_\fa(M))$ is weakly Laskerian. Considering the exact sequence $$0\rightarrow \Gamma_\fa(M)\rightarrow M\rightarrow M/\Gamma_\fa(M)\rightarrow 0$$ and the fact that $\Hom_R(R/\fa,M/\Gamma_\fa(M))=0$, we get the exact sequence $$0\rightarrow \Ext^1_R(R/\fa, \Gamma_\fa(M)) \rightarrow \Ext^1_R(R/\fa, M)\rightarrow \cdots.$$ Therefore, $\Ext^1_R(R/\fa,\Gamma_\fa(M))$ is weakly Laskerian by assumption. Now, assume that $s > 1$ and the result has been proved for all $i<s$. By the inductive hypothesis, $H^i_\fa(M)$ is $\fa$-weakly cofinite for all $i < s-1$. Hence, $\Ext^i_R(R/\fa,H^{s-1}_\fa(M))$ is weakly Laskerian for $i=0,1$, by assumption and Lemma \ref{minimax}. Since $H^{s-1}_\fa(M)$ is $FD_{\leq 1}$, we infer that it is $\fa$-weakly cofinite by Proposition \ref{n3}. This completes the inductive steps.

(\ref{n42}) In view of (\ref{n41}) and Lemma \ref{minimax}, the $R$-modules $\Hom_R(R/\fa, H^s_\fa(M))$ and $\Ext^1_R(R/\fa,H^s_\fa(M))$ are weakly Laskerian. Now, consider the exact sequence $$0\rightarrow X\rightarrow H^s_\fa(M)\rightarrow H^s_\fa(M)/X \rightarrow 0.$$ Thus, $\Hom_R(R/\fa, X)$ is weakly Laskerian and so $X$ is $\fa$-weakly cofinite by assumption and Lemma \ref{FD0}. Moreover, we obtain the following exact sequence:
\begin{multline*}
  \cdots \rightarrow \Hom_R(R/\fa, H^s_\fa(M))\rightarrow \Hom_R(R/\fa, H^s_\fa(M)/X)\rightarrow \Ext^1_R(R/\fa,X) \\ \rightarrow \Ext^1_R(R/\fa,H^s_\fa(M))\rightarrow \Ext^1_R(R/\fa, H^s_\fa(M)/X)\rightarrow \Ext^2_R(R/\fa, X)\rightarrow \cdots.
\end{multline*}

Therefore, $\Hom_R(R/\fa, H^s_\fa(M)/X)$ and $\Ext^1_R(R/\fa, H^s_\fa(M)/X)$ are weakly Laskerian, as required.
The final assertion follows from Remark \ref{pro}(\ref{iii}) and the fact that $$\Ass_R(\Hom_R(R/\fa, H^s_\fa(M)/X))=\Ass_R(H^s_\fa(M)/X).$$$\hfill\Box$

As the second main result of this paper, we obtain the following theorem which extends the main result of \cite{Kawa2011}, \cite[Theorem 2.7]{BNS2014}, \cite[Theorem 3.7]{AB2014}, \cite[Theorem~2.5]{Irani}
and \cite[Proposition~3.2]{BNSwl}. For abbreviation, we say that an $R$-module $M$ (not necessary $\fa$-torsion) is $\fa$-ETH-weakly cofinite if the $R$-module $\Ext^i_R(R/\fa,M)$ is weakly Laskerian for all $i$.

\begin{theo}\label{n5}
Let $\mathcal{C}$ denote the category of all $\fa$-ETH-weakly cofinite $FD_{\leq1}$ $R$-modules. Then $\mathcal{C}$ is an Abelian category. In particular, the category of all $\fa$-weakly cofinite $FD_{\leq1}$ $R$-modules is an Abelian category.
\end{theo}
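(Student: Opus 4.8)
The plan is to verify the standard criterion for an Abelian subcategory: that $\mathcal{C}$ is a full subcategory of the category of $R$-modules which contains the zero module and is closed under finite direct sums and under taking kernels and cokernels of its morphisms. The zero module clearly lies in $\mathcal{C}$, and for $M,N\in\mathcal{C}$ the sum $M\oplus N$ is $FD_{\leq 1}$ (the class of $FD_{\leq 1}$ modules being a Serre subcategory) while $\Ext^i_R(R/\fa,M\oplus N)\cong\Ext^i_R(R/\fa,M)\oplus\Ext^i_R(R/\fa,N)$ is weakly Laskerian for all $i$; hence $M\oplus N\in\mathcal{C}$. So the entire content is to show that for a morphism $f\colon M\to N$ in $\mathcal{C}$ both $\ker f$ and $\coker f$ belong to $\mathcal{C}$. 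Writing $K=\ker f$, $I=\im f$ and $C=\coker f$, each of $K,I,C$ is automatically $FD_{\leq 1}$, being a submodule or quotient of $M$ or $N$ and using once more that $FD_{\leq 1}$ is a Serre subcategory; thus the only real issue is the $\fa$-ETH-weak cofiniteness of $K$, $I$ and $C$. The difficulty is precisely that the class of $\fa$-ETH-weakly cofinite modules is \emph{not} a Serre subcategory (it need not be closed under submodules or quotients), so this cannot be read off formally.

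First I would isolate a two-out-of-three principle: if $0\to A\to B\to D\to 0$ is exact and two of $A,B,D$ are $\fa$-ETH-weakly cofinite, then so is the third. This follows by chasing the long exact sequence $\cdots\to\Ext^i_R(R/\fa,A)\to\Ext^i_R(R/\fa,B)\to\Ext^i_R(R/\fa,D)\to\Ext^{i+1}_R(R/\fa,A)\to\cdots$, in which at each spot the relevant $\Ext$ module sits in an exact sequence with a quotient and a submodule of two weakly Laskerian modules as outer terms; since weakly Laskerian modules form a Serre subcategory (Remark \ref{pro}(\ref{ii})), the middle term is weakly Laskerian.

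Next I would treat the kernel, which is the heart of the argument. From $f$ I extract the short exact sequences $0\to K\to M\to I\to 0$ and $0\to I\to N\to C\to 0$. Applying the left exact functor $\Hom_R(R/\fa,-)$ to the inclusions $K\hookrightarrow M$ and $I\hookrightarrow N$ shows that $\Hom_R(R/\fa,K)$ and $\Hom_R(R/\fa,I)$ are submodules of the weakly Laskerian modules $\Hom_R(R/\fa,M)$ and $\Hom_R(R/\fa,N)$, hence are themselves weakly Laskerian. The key point is then to bound $\Ext^1_R(R/\fa,K)$ without circularity: the long exact sequence of $0\to K\to M\to I\to 0$ contains the exact piece $\Hom_R(R/\fa,I)\to\Ext^1_R(R/\fa,K)\to\Ext^1_R(R/\fa,M)$, so $\Ext^1_R(R/\fa,K)$ sits in an exact sequence whose outer terms are a quotient of $\Hom_R(R/\fa,I)$ and a submodule of $\Ext^1_R(R/\fa,M)$, both weakly Laskerian; hence $\Ext^1_R(R/\fa,K)$ is weakly Laskerian. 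Since $K$ is $FD_{\leq 1}$, Proposition \ref{n3} upgrades the weak Laskerianness of $\Hom_R(R/\fa,K)$ and $\Ext^1_R(R/\fa,K)$ to that of $\Ext^i_R(R/\fa,K)$ for all $i$, i.e. $K\in\mathcal{C}$. With $K$ available, the two-out-of-three principle applied to $0\to K\to M\to I\to 0$ yields $I\in\mathcal{C}$, and applied to $0\to I\to N\to C\to 0$ yields $C\in\mathcal{C}$, completing closure under kernels and cokernels.

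Finally, for the \emph{in particular} clause I would observe that $\fa$-weak cofiniteness is $\fa$-ETH-weak cofiniteness together with the support condition $\Supp_R(-)\subseteq V(\fa)$, and that $\Supp_R(\ker f)$ and $\Supp_R(\coker f)$ are contained in $\Supp_R(M)\cup\Supp_R(N)\subseteq V(\fa)$; since the support condition descends to submodules and quotients, the full subcategory of $\fa$-weakly cofinite $FD_{\leq 1}$ modules is likewise closed under kernels, cokernels and finite direct sums, hence Abelian. The main obstacle, and the place where the hypotheses are genuinely used, is the kernel step: because $\fa$-ETH-weak cofiniteness is not inherited by submodules, one must exploit the $FD_{\leq 1}$ hypothesis through Proposition \ref{n3} to reduce the infinitely many $\Ext$ conditions to $\Hom_R(R/\fa,-)$ and $\Ext^1_R(R/\fa,-)$, and then route the bound on $\Ext^1_R(R/\fa,K)$ through $\Hom_R(R/\fa,I)$ rather than through a higher $\Ext$ of $K$, which is exactly what breaks the apparent circularity.
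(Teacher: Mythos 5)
Your proof is correct and follows essentially the same route as the paper's: you bound $\Hom_R(R/\fa,\ker f)$ and $\Ext^1_R(R/\fa,\ker f)$ using the long exact sequence of $0\to\ker f\to M\to\im f\to 0$ together with the embedding $\Hom_R(R/\fa,\im f)\subseteq\Hom_R(R/\fa,N)$, upgrade to all $\Ext$ modules via Proposition \ref{n3} since $\ker f$ is $FD_{\leq 1}$, and then propagate to $\im f$ and $\coker f$ through the two short exact sequences, exactly as the paper does. Your explicit two-out-of-three lemma, the closure under direct sums, and the support argument for the \emph{in particular} clause merely spell out steps the paper leaves implicit.
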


\begin{proof}
Let $M$ and $N$ be two $R$-modules belong to $\mathcal{C}$ and $f : M \rightarrow
 N$ be an $R$-homomorphism. If we prove that the $R$-modules $\ker f$ and $\coker f$ are $\fa$-ETH-weakly cofinite, the assertion follows. To do this, considering the exact sequence  $$0 \rightarrow \ker f \rightarrow M \rightarrow \im f \rightarrow 0,$$ we obtain the exact sequence
 \begin{align*}
  0 &\rightarrow \Hom_R(R/\fa, \ker f)\rightarrow \Hom_R(R/\fa, M)\rightarrow \Hom_R(R/\fa, \im f) \\ & \rightarrow \Ext^1_R(R/\fa, \ker f)\rightarrow \Ext^1_R(R/\fa, M)\rightarrow \cdots ,
\end{align*}
which follows that $\Hom_R(R/\fa, \ker f)$ and $\Ext^1_R(R/\fa, \ker f)$ are weakly Laskerian. Note that $\Hom_R(R/\fa, \im f)\subseteq \Hom_R(R/\fa, N)$ is weakly Laskerian. Therefore, we infer from Proposition \ref{n3} that $\ker f$ is $\fa$-ETH-weakly cofinite. Now, in view of the exact sequences
 $$0\rightarrow \ker f\rightarrow M\rightarrow \im f\rightarrow 0$$ and $$0\rightarrow \im f\rightarrow N\rightarrow \coker f \rightarrow 0$$ the $R$-module $\coker f$ is $\fa$-ETH-weakly cofinite, as desired.
\end{proof}

As an immediate consequence of Theorem \ref{n5} we obtain the following corollary.

\begin{Coro}
If $M$ is an $\fa$-weakly cofinite $FD_{\leq1}$ $R$-module, then $\Ext^i_R(N,M)$ and $\Tor^R_i(N,M)$ are $\fa$-weakly cofinite $FD_{\leq1}$ $R$-modules, for all finitely generated $R$-modules $N$ and all integers $i\geq 0$.
\end{Coro}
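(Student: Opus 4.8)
The plan is to realize each $\Ext^i_R(N,M)$ and $\Tor^R_i(N,M)$ as a (co)homology module of a complex whose terms lie in the abelian category $\mathcal{C}$ of Theorem \ref{n5}, and then to exploit that the (co)homology of a complex of objects of an abelian subcategory is again such an object. First I would record that $\mathcal{C}$ is closed under finite direct sums: the class of $FD_{\leq 1}$ modules is a Serre subcategory, and the $\fa$-weakly cofinite modules are stable under finite direct sums because $\Ext^i_R(R/\fa,-)$ commutes with finite direct sums, the weakly Laskerian modules form a Serre subcategory, and the support condition is preserved. Consequently, for any finitely generated free $R$-module $R^b$ we have $\Hom_R(R^b,M)\cong M^b\in\mathcal{C}$ and $R^b\otimes_R M\cong M^b\in\mathcal{C}$.

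Next, since $R$ is Noetherian and $N$ is finitely generated, I would choose a free resolution $F_\bullet\to N$ in which every $F_i$ is a finitely generated free $R$-module. Applying $\Hom_R(-,M)$ and $-\otimes_R M$ produces complexes each of whose terms is isomorphic to a finite direct sum of copies of $M$, hence an object of $\mathcal{C}$, and whose differentials are $R$-homomorphisms, hence morphisms in $\mathcal{C}$. By definition $\Ext^i_R(N,M)$ and $\Tor^R_i(N,M)$ are the cohomology and homology of these two complexes.

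It then remains to argue that such (co)homology stays in $\mathcal{C}$. For a morphism $d\colon A\to B$ in $\mathcal{C}$, Theorem \ref{n5} gives $\ker d\in\mathcal{C}$ and $\coker d\in\mathcal{C}$. Given a complex $\cdots\to A^{i-1}\xrightarrow{d^{i-1}}A^i\xrightarrow{d^i}A^{i+1}\to\cdots$ of objects of $\mathcal{C}$, the module $\ker d^i$ is then in $\mathcal{C}$; the inclusion $\ker d^{i-1}\hookrightarrow A^{i-1}$ is a morphism of $\mathcal{C}$ whose cokernel is $\im d^{i-1}$, so $\im d^{i-1}\in\mathcal{C}$; and finally the cohomology $\ker d^i/\im d^{i-1}$ is the cokernel of the inclusion $\im d^{i-1}\hookrightarrow\ker d^i$, again a morphism of $\mathcal{C}$, whence it lies in $\mathcal{C}$. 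The identical reasoning applies to the homology of the $\Tor$ complex, giving $\Ext^i_R(N,M),\Tor^R_i(N,M)\in\mathcal{C}$, that is, they are $\fa$-weakly cofinite $FD_{\leq 1}$.

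I would expect no serious obstacle here, as the statement really is immediate once Theorem \ref{n5} is available. The one point deserving care is that passing to (co)homology needs $\mathcal{C}$ to be closed not merely under the kernel and cokernel of a single morphism but under the iterated subquotient construction $\ker/\im$; I would spell this out as above rather than appeal vaguely to the phrase ``$\mathcal{C}$ is abelian.'' A minor secondary point, to be noted before the free-resolution step, is the closure of $\mathcal{C}$ under finite direct sums, together with the automatic verification of the support condition, since $\Supp_R(\Ext^i_R(N,M))$ and $\Supp_R(\Tor^R_i(N,M))$ are contained in $\Supp_R(M)\subseteq V(\fa)$.
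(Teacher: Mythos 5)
Your proposal is correct and takes essentially the same route as the paper's proof: resolve $N$ by finitely generated free modules, apply $\Hom_R(-,M)$ and $-\otimes_R M$, and use Theorem \ref{n5} to conclude that the resulting (co)homology modules remain $\fa$-weakly cofinite and $FD_{\leq 1}$. The paper states this in two lines, leaving implicit the closure of the category under finite direct sums and the $\ker/\im$ subquotient argument that you spell out explicitly.
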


\begin{proof}
Since $N$ is finitely generated, it follows that $N$ has a free resolution of finitely generated
free modules. Now the assertion follows using Theorem \ref{n5} and computing the modules
$\Ext^i_R(N,M)$ and $\Tor^R_i(N,M)$, by this free resolution.
\end{proof}

In the sequel,  we will prove some assertions about the weakly cofiniteness of local cohomology modules with respect to ideals of dimension at most one.

\begin{prop}\label{n6}
Let $M$ be an $R$-module of dimension $n$ such that $\Ext^j_R(R/\fa,M)$ is weakly Laskerian for all $j\leq n$. Then the $R$-module $H^i_\fb(M)$ is $\fb$-weakly cofinite for all $i\geq 0$ and for any ideal $\fa\subseteq \fb$ with $\dim R/\fb\leq1$.
\end{prop}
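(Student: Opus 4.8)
The plan is to first convert the $\fa$-hypothesis into a $\fb$-hypothesis, then induct on the cohomological degree with Lemma \ref{minimax}, and finally dispose of the top degree by a support (dimension) argument that compensates for the one piece of data the induction cannot supply.

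First I would use the inclusion $\fa\subseteq\fb$, which gives $V(\fb)\subseteq V(\fa)$ and hence $\Supp_R(R/\fb)\subseteq\Supp_R(R/\fa)$. Applying Lemma \ref{Gruson} with $L=R/\fb$ and $t=n$, the hypothesis that $\Ext^j_R(R/\fa,M)$ is weakly Laskerian for all $j\le n$ upgrades to the statement that $\Ext^j_R(R/\fb,M)$ is weakly Laskerian for all $j\le n$. From here I work entirely with $\fb$. Since each $H^i_\fb(M)$ is $\fb$-torsion with $\Supp_R H^i_\fb(M)\subseteq V(\fb)$ and $\dim R/\fb\le 1$, every $H^i_\fb(M)$ is an $\fb$-torsion module of dimension at most $1$, and $H^i_\fb(M)=0$ for $i>n=\dim M$ by Grothendieck's Vanishing Theorem \cite[Theorem~6.1.2]{BS}. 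By Lemma \ref{n1} it therefore suffices to show that $\Hom_R(R/\fb,H^i_\fb(M))$ and $\Ext^1_R(R/\fb,H^i_\fb(M))$ are weakly Laskerian for each $i$.

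The core is an induction on $i$ via Lemma \ref{minimax}, applied with $\mathcal{S}$ the class of weakly Laskerian modules and the ideal $\fb$. Assume $H^{i'}_\fb(M)$ is $\fb$-weakly cofinite for all $i'<i$; this is precisely the assertion that $\Ext^j_R(R/\fb,H^{i'}_\fb(M))$ is weakly Laskerian for all $j$ and all $i'<i$, which is the hypothesis of Lemma \ref{minimax}. That lemma then produces $\Hom_R(R/\fb,H^i_\fb(M))$ from the weakly Laskerianness of $\Ext^i_R(R/\fb,M)$, and $\Ext^1_R(R/\fb,H^i_\fb(M))$ from that of $\Ext^{i+1}_R(R/\fb,M)$. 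For each $i<n$ both degrees $i$ and $i+1$ lie in the range $\le n$ secured in the first step, so Lemma \ref{n1} yields at once that $H^i_\fb(M)$ is $\fb$-weakly cofinite, closing the induction for all $i<n$.

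The hard part will be the top degree $i=n$, where the needed input $\Ext^{n+1}_R(R/\fb,M)$ is not available, so Lemma \ref{minimax} delivers $\Hom_R(R/\fb,H^n_\fb(M))$ but not $\Ext^1_R(R/\fb,H^n_\fb(M))$. I would overcome this by proving that $H^n_\fb(M)$ has dimension $0$, so that $\Hom$ alone suffices. Suppose $\fp\in\Supp_R H^n_\fb(M)$ is non-maximal; as $\Supp_R H^n_\fb(M)\subseteq V(\fb)$ and $\dim R/\fb\le 1$, necessarily $\dim R/\fp=1$. Localising, $(H^n_\fb(M))_\fp\cong H^n_{\fb R_\fp}(M_\fp)\ne 0$ forces $\dim_{R_\fp}M_\fp=n$ by \cite[Theorem~6.1.2]{BS} together with the bound $\dim_{R_\fp}M_\fp\le\dim M=n$; hence there is a prime $\fq\in\Supp_R M$ with $\fq\subseteq\fp$ and a saturated chain of length $n$ from $\fq$ to $\fp$. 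Adjoining a maximal ideal strictly above $\fp$ (possible since $\dim R/\fp=1$) gives a chain of length $n+1$ starting at $\fq$, so $\dim R/\fq\ge n+1$, contradicting $\fq\in\Supp_R M$ and $\dim M=n$. (Here I use that $\Supp_R M$ is closed under specialisation, since $\fp'\supseteq\fp\in\Supp_R M$ and $\Ann_R(x)\subseteq\fp$ give $\Ann_R(x)\subseteq\fp'$.) Thus $\Supp_R H^n_\fb(M)\subseteq\Max(R)$, so $H^n_\fb(M)$ is an $\fb$-torsion $FD_{\le 0}$ module; combining this with the weakly Laskerian module $\Hom_R(R/\fb,H^n_\fb(M))$ from Lemma \ref{minimax}, Lemma \ref{FD0} shows $H^n_\fb(M)$ is $\fb$-weakly cofinite. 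Together with the trivial case $i>n$, this completes the argument.
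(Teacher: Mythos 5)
Your proof is correct and follows essentially the same route as the paper's: convert the hypothesis from $\fa$ to $\fb$ via Lemma \ref{Gruson}, induct on the cohomological degree using Lemma \ref{minimax} together with Lemma \ref{n1}, and handle the top degree $i=n$ (where $\Ext^{n+1}_R(R/\fb,M)$ is unavailable) by showing $\Supp_R H^n_\fb(M)\subseteq\Max(R)$ through a localization/dimension argument, then invoking the $\Hom$-criterion for zero-dimensional torsion modules. The only difference is organizational: the paper splits into cases ($n=0$; $\dim R/\fb=0$; $\dim R/\fb=1$) and uses Lemma \ref{wA} directly where you use Lemma \ref{FD0}, whereas you treat everything uniformly since Lemma \ref{n1} covers all $\fb$-torsion modules of dimension at most one --- a streamlining of the same argument, not a different one.
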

\begin{proof}
By Grothendieck's Vanishing Theorem we only need to prove the assertion for $0\leq i \leq n$.
Let $\fb$ be an arbitrary ideal of $R$ containing $\fa$ with $\dim R/\fb\leq 1$. Then by assumption and Lemma \ref{Gruson}, $\Ext^j_R(R/\fb,M)$ is a weakly Laskerian $R$-module for all $j\leq n$.
We first prove the assertion for the case $n=0$. Then by assumption, the $R$-module $$\Hom_R(R/\fb, \Gamma_\fb(M))=\Hom_R(R/\fb, M)$$ is weakly Laskerian. Hence, $\Gamma_\fb(M)$ is weakly Laskerian (and so is weakly cofinite) by virtue of Lemma \ref{wA} and the fact that $\Supp_R(\Gamma_\fb(M))\subseteq V(\fb)$. Thus, it remains to give the proof for the case $n>0$. For this purpose, there are two cases to consider: $\dim R/\fb=0$ or $\dim R/\fb=1$.

\textbf{Case 1:} If $\dim R/\fb=0$, then in the light of assumption, $\Hom_R(R/\fb, \Gamma_\fb(M))=\Hom_R(R/\fb, M)$ is a weakly Laskerian $R$-module with support in $\Max(R)$. Hence,  $\Gamma_\fb(M)$ is weakly Laskerian by Lemma \ref{wA} and so is $\fb$-weakly cofinite. Now suppose, inductively, that $0 < i \leq n$ and the $R$-modules $$H^0_\fb(M), H^1_\fb(M), \cdots, H^{i-1}_\fb(M)$$ are $\fb$-weakly cofinite. Since $\Supp_R(H^i_\fb(M))\subseteq V(\fb)$ and the $R$-module $\Ext^j_R(R/\fb, M)$ is weakly Laskerian for all $j\leq n$, we infer from Lemma \ref{minimax} that $\Hom_R(R/\fb, H^i_\fb(M))$ is a zero-dimensional weakly Laskerian $R$-module and so $H^i_\fb(M)$ is weakly Laskerian by Lemma \ref{wA}, as desired.

\textbf{Case 2:} Let $\dim R/\fb=1$. The proof is by induction on $0 \leq i < n$. Since $\Hom_R(R/\fb, M/\Gamma_\fb(M))=0$, it follows from the assumption and the exact sequence
\begin{align*}
  &0 \rightarrow \Hom_R(R/\fb, \Gamma_\fb(M))\rightarrow \Hom_R(R/\fb,M)\rightarrow \Hom_R(R/\fb, M/\Gamma_\fb(M)) \\ & \rightarrow \Ext^1_R(R/\fb, \Gamma_\fb(M))\rightarrow \Ext^1_R(R/\fb,M)
\end{align*}
that  the $R$-modules $\Hom_R(R/\fb, \Gamma_\fb(M))$ and $\Ext^1_R(R/\fb, \Gamma_\fb(M))$ are weakly Laskerian. Hence, as $\dim \Gamma_\fb(M)\leq 1$, the $R$-module  $\Gamma_\fb(M)$ is $\fb$-weakly cofinite by Lemma \ref{n1}.
Now suppose that the assertion holds for $i-1$; we will prove it for $i$. By the inductive hypotheses, the $R$-modules $$H^0_\fb(M), H^1_\fb(M), \cdots, H^{i-1}_\fb(M)$$ are $\fb$-weakly cofinite. Since the $R$-modules $\Ext^i_R(R/\fb,M)$ and $\Ext^{i+1}_R(R/\fb,M)$ are weakly Laskerian, it follows from Lemma \ref{minimax} that the $R$-modules $$\Hom_R(R/\fb, H^i_\fb(M))\ \text{and}\  \Ext^1_R(R/\fb, H^i_\fb(M))$$  are weakly Laskerian and so in view of Lemma \ref{n1} the $R$-module $H^i_\fb(M)$ is $\fb$-weakly cofinite, for all $i = 0, 1, \cdots, n-1$.
Since $\Ext^n_R(R/\fb, M)$ is weakly Laskerian, $\Hom_R(R/\fb, H^n_\fb(M))$ is also weakly Laskerian by Lemma \ref{minimax}. If there exists $\fp \in \Supp_R(H^n_\fb(M))\subseteq V(\fb)$ with $\dim R/\fp=1$, then it is easy to see that $\dim M_\fp\leq n-1$ and so $(H^n_\fb(M))_\fp=0$ by Grothendieck's Vanishing Theorem, a contradiction.
Therefore, $$\Supp_R(H^n_\fb(M))\subseteq \Max(R).$$ This implies that the $R$-module $\Hom_R(R/\fb, H^n_\fb(M))$ is a weakly Laskerian $R$-module with support in $\Max(R)$. Hence, $H^n_\fb(M)$ is weakly Laskerian by Lemma \ref{wA} and so is $\fb$-weakly cofinite, as required.
\end{proof}

\begin{theo}\label{n7}
Let $M$ be an $R$-module of dimension $n$ such that $\Ext^i_R(R/\fa,M)$ is weakly Laskerian for all $i\leq n$. Then the $R$-module $\Ext^i_R(N,M)$ is weakly Laskerian for all $i\geq 0$ and for any finitely generated $R$-module $N$ with $\Supp_R(N) \subseteq V(\fa)$ and $\dim N \leq 1$.
\end{theo}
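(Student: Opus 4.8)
The plan is to reduce the statement about the arbitrary finitely generated module $N$ to the single cyclic module $R/\fb$ for a well-chosen ideal $\fb$, apply Proposition \ref{n6} to control the local cohomology of $M$ with respect to $\fb$, and then transfer the conclusion back to $N$ via Lemma \ref{Gruson}. First I would set $\fb := \fa + \Ann_R(N)$. Since $N$ is finitely generated, $\Supp_R(N) = V(\Ann_R(N))$, and the hypothesis $\Supp_R(N) \subseteq V(\fa)$ gives
$$V(\fb) = V(\fa) \cap V(\Ann_R(N)) = \Supp_R(N),$$
so that $\fa \subseteq \fb$ and $\dim R/\fb = \dim N \leq 1$. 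As $\Ext^j_R(R/\fa, M)$ is weakly Laskerian for all $j \leq n = \dim M$ by hypothesis, Proposition \ref{n6} applies and yields that $H^q_\fb(M)$ is $\fb$-weakly cofinite for every $q \geq 0$; in particular $\Ext^p_R(R/\fb, H^q_\fb(M))$ is weakly Laskerian for all $p, q \geq 0$.

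The core step is to pass from this information about the individual local cohomology modules to the module $M$ itself, i.e.\ to show that $M$ is $\fb$-ETH-weakly cofinite. Here I would invoke the Grothendieck spectral sequence of the composite $\Hom_R(R/\fb, -) \circ \Gamma_\fb$. This composite equals $\Hom_R(R/\fb, -)$, because every homomorphism out of the $\fb$-torsion module $R/\fb$ factors through $\Gamma_\fb$, and $\Gamma_\fb$ carries injective $R$-modules to injective (hence $\Hom_R(R/\fb, -)$-acyclic) modules over the Noetherian ring $R$. The resulting spectral sequence reads
$$E_2^{p,q} = \Ext^p_R(R/\fb, H^q_\fb(M)) \Longrightarrow \Ext^{p+q}_R(R/\fb, M).$$
Every $E_2^{p,q}$ is weakly Laskerian by the previous paragraph, and since the class of weakly Laskerian modules is a Serre subcategory (Remark \ref{pro}(\ref{ii})) it is closed under subquotients and extensions. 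Hence each $E_\infty^{p,q}$ is weakly Laskerian, and as only the finitely many terms with $p+q=i$ contribute to total degree $i$, the finite filtration on the abutment forces $\Ext^i_R(R/\fb, M)$ to be weakly Laskerian for every $i \geq 0$.

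Finally, since $N$ is finitely generated with $\Supp_R(N) = V(\fb) = \Supp_R(R/\fb)$, Lemma \ref{Gruson} (applied for each $t$, using that the $\Ext$-modules against $R/\fb$ are weakly Laskerian in all degrees) gives that $\Ext^i_R(N, M)$ is weakly Laskerian for all $i \geq 0$, which is the desired conclusion.

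I expect the main obstacle to be exactly the middle step: deducing $\fb$-ETH-weak cofiniteness of $M$ from the $\fb$-weak cofiniteness of all of its local cohomology modules. This is the converse direction to the Lemma \ref{minimax}-type statements, and it cannot be obtained degree by degree from a single short exact sequence; it genuinely requires the spectral sequence together with the stability of weakly Laskerianness under the finite filtration of the abutment. The one point demanding care is the legitimacy of the Grothendieck spectral sequence, which rests on the functorial identity $\Hom_R(R/\fb, -) \cong \Hom_R(R/\fb, \Gamma_\fb(-))$ and on $\Gamma_\fb$ preserving injectives; both are standard over a Noetherian ring.
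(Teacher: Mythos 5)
Your proof is correct, and it reaches the goal by a genuinely different route than the paper. The paper takes a prime filtration $0=N_0\subseteq N_1\subseteq\cdots\subseteq N_t=N$ with $N_j/N_{j-1}\cong R/\fp_j$ (via \cite[Theorem~6.4]{Matsu}), notes that each $\dim R/\fp_j\leq 1$, applies Proposition \ref{n6} to each $\fp_j$ separately, and then---precisely at the point you single out as the ``core step''---it simply cites Melkersson \cite[Corollary~3.10]{Mel} to pass from the $\fp_j$-weak cofiniteness of all $H^i_{\fp_j}(M)$ to the weak Laskerianness of all $\Ext^i_R(R/\fp_j,M)$, finishing by climbing the filtration with long exact sequences of $\Ext$. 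You instead collapse the reduction to a single cyclic module by setting $\fb=\fa+\Ann_R(N)$, so that $V(\fb)=\Supp_R(N)$ and $\dim R/\fb\leq 1$, apply Proposition \ref{n6} only once, prove the conversion step yourself via the first-quadrant spectral sequence $E_2^{p,q}=\Ext^p_R(R/\fb,H^q_\fb(M))\Rightarrow\Ext^{p+q}_R(R/\fb,M)$ (your justification is sound: over a Noetherian ring $\Gamma_\fb$ preserves injectives, and the finite filtration of the abutment together with the Serre property of weakly Laskerian modules closes the argument), and then transfer from $R/\fb$ to $N$ by Lemma \ref{Gruson}, which is legitimate since $\Supp_R(N)=\Supp_R(R/\fb)$. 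The content of your spectral-sequence step is exactly the content of the Melkersson result the paper outsources, so your version is more self-contained at the cost of invoking homological machinery explicitly; conversely, your combination of $\fb=\fa+\Ann_R(N)$ with Lemma \ref{Gruson} replaces the paper's filtration induction with a one-shot argument, which is arguably cleaner and makes plain that nothing about the particular filtration of $N$ matters.
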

\begin{proof}
Let $N$ be a finitely generated $R$-module such that $\Supp_R(N) \subseteq V (\fa)$
and $\dim N \leq 1$. Then, using \cite[Theorem 6.4]{Matsu}, there exist prime ideals $\fp_1, \cdots, \fp_t$ of $R$ and a chain $0=N_0\subseteq N_1\subseteq\cdots\subseteq N_t=N$ of submodules of $N$ such that $N_j/N_{j-1}\cong R/\fp_j$ for all $j = 1, \cdots,t$. Since $\fp_j \in \Supp_R(N)$, we deduce that $\dim R/\fp_j \leq 1$ and so in the light of Proposition \ref{n6}, the $R$-module $H^i_{\fp_j}(M)$ is $\fp_j$-weakly cofinite for all $i \geq 0$ and for each $j = 1, \cdots, t$. Thus, by \cite[Corollary~3.10]{Mel}, the $R$-module $\Ext^i_R(R/\fp_j,M)$ is weakly Laskerian for all $i \geq 0$ and for each $j = 1, \cdots, t$. Now, considering the exact sequences
\begin{align*}
 0\rightarrow N_1 \rightarrow &N_2 \rightarrow R/\fp_2 \rightarrow 0\\
   0\rightarrow N_2 \rightarrow &N_3 \rightarrow R/\fp_3 \rightarrow 0 \\
    &\vdots \\
     0\rightarrow N_{t-1} \rightarrow &N_t \rightarrow R/\fp_t \rightarrow 0
\end{align*}
we infer that $\Ext^i_R(N, M)$ is weakly Laskerian, as desired.
\end{proof}


\providecommand{\bysame}{\leavevmode\hbox to3em{\hrulefill}\thinspace}
\providecommand{\MR}{\relax\ifhmode\unskip\space\fi MR }
\providecommand{\MRhref}[2]{%
  \href{http://www.ams.org/mathscinet-getitem?mr=#1}{#2}
}
\providecommand{\href}[2]{#2}

\end{document}